\renewcommand*{\eqref}[1]{%
	\hyperref[{#1}]{\textup{\tagform@{\ref*{#1}}}}%
}
\setlist[enumerate,1]{label={\textup{\textbf{\roman*)}}}}
\title{Non-linear Gagliardo--Nirenberg inequality 

involving a second-order elliptic  operator in non-divergent form}
\author{{\large{Agnieszka Ka{\l}amajska}}\\
\textit{Faculty of Mathematics, Informatics and Mechanics, University of Warsaw, Banacha 2, 02-097 Warsaw, Poland; email: A.Kalamajska@mimuw.edu.pl, ORCID: 0000-0001-5674-8059}
\vspace{0.5cm}\\
{\large{Dalimil Pe\v{s}a}}\\
\textit{ Department of Mathematical Analysis, Faculty of Mathematics and Physics,
Charles University, Sokolovsk\'a 83, 186 75 Praha 8, Czech Republic;  email: pesa@karlin.mff.cuni.cz, ORCID: 0000-0001-6638-0913} 
\vspace{0.5cm}\\
{\large{Tom\'a\v{s} Roskovec}}\\
\textit{Faculty of Education, University of South Bohemia in \v{C}esk\'e Bud\v{e}jovice, Jeron\'ymova 10, 371 15 \v{C}esk\'e Bud\v{e}jovice, Czech Republic; email: troskovec@jcu.cz, ORCID: 0000-0003-0438-0066}}
\def\rn{{\mathbf{R}^{n}}}
\newtheorem{Theorem}{\bf Theorem}[section]
\newtheorem{Lemma}[Theorem]{\bf Lemma}
\newtheorem{Remark}[Theorem]{\bf Remark}
\newtheorem{ex}[Theorem]{\bf Example}
\newtheorem{oq}{\bf Open problem}
\makeatletter \@addtoreset{equation}{section}
\newcommand{\beq}{\begin{equation}}
\newcommand{\eeq}{\end{equation}}
\newcommand{\R}{\mathbf{R}}
\newcommand{\Rn}{\mathbf{R}^n}
\newcommand{\A}{\mathbf{A}}
\newcommand{\dee}{\,\textup{d}}
\begin{document}

\maketitle

\begin{flushleft}
\textit{Mathematics subject classification (2020):} Primary 46E35; Secondary 46B70.\\
\textit{Keywords and phrases:} Sobolev spaces, multiplicative inequality, elliptic operator.
\end{flushleft}

 \begin{abstract}
\noindent

We obtain the inequalities of the form $$\int_{\Omega}|\nabla u(x)|^2h(u(x))\dee x\leq C\int_{\Omega} \left( \sqrt{ |P u(x)||{\cal T}_{H}(u(x))|}\right)^{2}h(u(x))\dee x +\Theta,$$ where
$\Omega\subset \rn$ is a bounded Lipschitz domain, $u\in W^{2,1}_{\rm loc}(\Omega)$ is non-negative,   $P$ is a uniformly elliptic operator in non-divergent form, ${\cal T}_{H}(\cdot )$ is certain transformation of the monotone $C^1$ function $H(\cdot)$, which is the primitive of the weight $h(\cdot)$, and $\Theta$ is the boundary term which depends on boundary values of $u$ and $\nabla u$, which hold under some additional assumptions. Our results are linked to some results from probability and potential theories, e.g.~to some variants of the Douglas formulae.
\end{abstract}

\section{Introduction}
\noindent
Let $\Omega\subseteq\R^n$, where $n\ge 2$, be a bounded connected domain with Lipschitz boundary,  let $u: \Omega\rightarrow \R$ be a non-negative function that belongs to the Sobolev space $W^{2,1}_{\rm loc}(\Omega)$.
Further, let $P$ be a uniformly elliptic operator with $C^1$ coefficients up to the
boundary of $\Omega$, given in non-divergent form, and defined by the matrix $\A(x)=\{ a_{i,j}(x)\}_{i,j=1,\dots,n}$, that is,
\begin{equation*}\label{peintro}
Pu(x)=\sum_{i,j}a_{i,j}(x)\frac{\partial^2 u}{\partial x_i\partial x_j}(x)   \text{ a.e. in }\Omega.  \end{equation*}

\noindent
{\it The purpose} of this paper is the derivation and further analysis of identities like
\begin{equation*}\label{id1} 
\begin{aligned}
     \int_{\Omega } \|\nabla u\|_{\A}^{2} h(u(x))\dee x  &= - \int_{\Omega} Pu \;  H(u) \dee x   
     -
    \int_{\Omega } \operatorname{div} \A\cdot \nabla u \;  H (u) \dee x +\Theta,\text{ where }  \\
\Theta &:= \int_{\partial\Omega}  n(x)^T \A(x) \nabla  (\widetilde{H} (u))\dee \sigma (x),    
\end{aligned}
\end{equation*}
as well as the inequalities like
\begin{equation}\label{ine1}  
\begin{aligned}
   \int_{\Omega } \|\nabla u\|_{\A}^{2} h(u)\dee x  
&\le   \int_{\Omega } |Pu||H(u)| \dee x + \Theta \text{ when } \operatorname{div}{\A}\equiv 0,\\
   \int_{\Omega } \|\nabla u\|_{\A}^{2} h(u)\dee x  
&\le  d_{\A} \int_{\Omega }\mathcal{G}_{H}(u)\dee x + 2\int_{\Omega } |Pu||H(u)| \dee x + 2\Theta \text{ otherwise,}  
\end{aligned}
\end{equation}
where $\sigma$ is the $(n-1)$-dimensional Hausdorff measure on $\partial\Omega$, $n(x)$ is the outer normal to $\partial\Omega$ (at point $x \in \partial \Omega$), the norm $\|\cdot\|_\A$ and the constant $d_{\A}$ are related to the operator $P$ as in \ref{(A1)} and \eqref{eq: A norm definition} in Section~\ref{preliminary} below, $H(u)$ and $\widetilde{H}(u)$ are respectively the compositions of $u$
with the first and second-order antiderivative of the function $h(\cdot)$, and $\mathcal{G}_{H}(u)= H^2(u)/h(u)$. See Theorems \ref{LemmaBoundedU} and \ref{LemmaBoundedUcoinfty} for the precise formulations.

\smallskip

As we show in Theorems \ref{structureA} and \ref{simplification2}, the inequalities \eqref{ine1} can be simplified to ones without the term $\int_{\Omega }\mathcal{G}_{H}(u)\dee x$, provided that some additional conditions are satisfied. Moreover, in the proof of Theorem~\ref{simplification2} we exploit Opial-type inequalities that we establish in Theorem~\ref{Opial}:
{ \small
\begin{eqnarray*}
\int_{\Omega \cap \{ 0<u\}} |\mathcal{T}_{H}(u)|^2 h(u)\dee x \lesssim  
\int_{\Omega \cap \{ 0<u\}} \|\nabla u \| |\mathcal{T}_{H}(u)| h(u) \dee x\lesssim 
\int_{\Omega \cap \{ 0<u\}} \|\nabla u\|^2 h(u)\dee x ,
\end{eqnarray*}
}
where $\mathcal{T}_H(u)=H(u)/h(u)$. We believe this result to be of independent interest.

\bigskip
\noindent
{\it Inequalities similar to} \eqref{ine1}  have appeared earlier in the literature in several places. 

Consider the classical Laplace operator $P=\Delta$ and $u:\Omega\rightarrow \R$ being sufficiently regular and satisfying the boundary conditions that ensure $\Theta =0$. Then the inequality \eqref{ine1} follows from a family of inequalities which were obtained earlier in \cite{akitch2}:\begin{eqnarray}\label{laplace}
\int_{\Omega} \|\nabla u\|^{p} h(u) \dee x&\le& C\int_{\Omega} \left(\sqrt{|{\mathcal T}_H(u)\Delta u|}\right)^p h(u) \dee x,
\end{eqnarray}
where ${\mathcal T}_H(u)= H(u)/h(u)$, and $p\ge 2$.

The inequalities \eqref{ine1} were inspired by the following inequality from \cite{kp1}
\begin{eqnarray*}
\int_{\R^n} G(\|\nabla u\|)\dee x&\le& C\int_{\R^n} G(\sqrt{|u|\|\nabla^{(2)}u\|})\dee x,\ u\in C_0^\infty (\R^n), 
\end{eqnarray*}
where $G$ is convex and $G(s)/s^2$ is bounded near $0$, as well as by the earlier work \cite{akitch1}, where in place of $\Delta u$ in \eqref{laplace} one deals with the Hessian  $\nabla^{(2)}u$. The main difference between our approach in this work and the approaches in \cite{akitch1,akitch2} is that we assume that $u\in W^{2,1}_{\rm loc}(\Omega)$ and $\widetilde{H}(u)\in W^{2,1}(\Omega)$, where $\widetilde{H}$ is second antiderivative of $h$, while the assumption in \cite{akitch1,akitch2} was $u\in W^{2,1}(\Omega)\cap C \left ( \overline{\Omega} \right)$, when restricted to $p=2$, see Theorems 3.2 and 3.4 in \cite{akitch1} and Theorem 2.3 in \cite{akitch2}.

Some variants of the inequality \eqref{laplace} in one dimension have been derived and analysed in \cite{cfk, km, akjp, kp2}. As their pioneering source, we consider the inequality due to Mazya from \cite[Lemma 1, Section 8.2.1]{ma}:
\begin{equation*}
\int_{{\rm supp} f'} \left(\frac{ |f'|}{f^{\frac{1}{2}}}\right)^p\dee x \le \left( \frac{p-1}{|1-\frac{1}{2} p|}\right)^{\frac{p}{2}} \int_\R |f''|^{{p}}\dee x,\  \ p>2,  
\end{equation*}
which is valid for all smooth, non-negative, and compactly supported functions $f$ and corresponds to \eqref{laplace} for the choice $h(s)=s^{-\frac{1}{2}}$.

\bigskip

\noindent
Let us observe that we can retrieve a variant of the classical Gagliardo--Nirenberg interpolation inequality from \eqref{ine1}. Indeed, by using the H\"older's inequality, with $1<p,p'<\infty$ that satisfy $1=\frac{1}{p}+\frac{1}{p'}$, and by plugging in $h\equiv 1$ and $H(s)=s$, so that $\mathcal{T}_H(s)=s$, we get

\begin{eqnarray*}\label{clasgn}
\|\nabla u\|_{L^2(\Omega)}\le C\left(\|\nabla^{(2)}u\|^{\frac{1}{2}}_{L^p(\Omega)} \| u\|^{\frac{1}{2}}_{L^{p'}(\Omega)} +\| u\|_{L^{p'}(\Omega)}\right),
\end{eqnarray*}
where $\Omega$ is a bounded and sufficiently regular (we refer to \cite{ga,n1} as the classical sources, as well as to the recent paper \cite{Fiforoso}). Note that $|Pu|\lesssim \|\nabla^{(2)}u\|$ and $\|\nabla u\|^2_\A\approx \|\nabla u\|^2$ by uniform ellipticity of $P$.

Let us note that an interesting non-linear variant of the Gagliardo--Nirenberg interpolation inequality has appeared prior to our research in \cite{RivStrz}. However, neither our theorems nor \cite{akitch1, akitch2} that inspired our research are consequences of this result and we focus on a different setting.

\bigskip
\noindent
{\it Motivations} for our analysis are two-fold:\\
\noindent
A) to obtain a priori estimates for the solutions of non-linear PDEs.
B) applications in harmonic analysis and potential theory; \\
Part A) will be discussed in our forthcoming paper, which will focus on applications of our theory to regularity and qualitative properties  of solutions of second-order elliptic PDEs as e.g.
\begin{equation*}\label{eqEmFtype}Pu=\left\{\begin{array}{ccc}
f(x) u^{-\gamma} & {\rm a. e.~in} &\Omega \\
u\equiv 0 & {\rm on}& \partial\Omega 
\end{array}\right.\text{ for }
\gamma>0, \  f\in L^1(\Omega),  u(x)\in (0,\infty) \ \hbox{\rm  a.e.~in } \Omega.  
\end{equation*}
\smallskip
One of the motivations in the direction B) came from the paper \cite{metafone-spina} by Metafune and Spina concerning elliptic operators generating analytic semi-groups in $L^p(\R^n)$, who focused on the following identity:
$$
\int_{\R^n}
u |u|^{p-2}\Delta u \dee x = - (p-1)
\int_{\R^n}
|u|^{p-2} \| \nabla u\|^2 \dee x ,
$$ 
where $u \in W^{2,p}(\R^n )$, $1 < p < \infty$. We refer to Section~\ref{SectionMetafune} for a more detailed treatment.

Another one is presented  in Section~\ref{chain-sect},
where we conclude the following integral chain-rule-type upper bound 
$$ \int_\Omega |P(\widetilde{H}(u))|\dee x \lesssim \int_\Omega |\widetilde{H}^{'}(u)  Pu|\dee x, $$ 
where $P$ is the elliptic operator as in \eqref{pe}. Note that the pointwise chain-rule $P(\widetilde{H}(u))= \widetilde{H}^{'}(u)  Pu$ obviously does not hold. The pointwise chain-rule-type formulae for the infinitesimal generators of the diffusion processes were derived in \cite[Lemma~1, p.~171]{bakry-emery}. A discussion of motivations coming from the probability and potential theories, including several open problems, will be presented in Section~\ref{probabilityka}. 

\bigskip

Let us mention that second-order operators in the non-divergent form play a crucial role in the theory of elliptic PDE’s (see e.g.~\cite{[GT]}). In the probality theory, these operators are an important subclass of generators of Feller semi-groups. For example, \cite[p.~71, Theorem~3.3]{[BSchW]} says that an elliptic second-order operator in non-divergent form, with very weak assumptions on the coefficients, generates a Feller semi-group. They also appear as generators of  It\^{o} diffusion processes, see e.g.~\cite[p.~123, Theorem~7.3.3]{[Oks]}. Moreover, non-divergent operators appear in the Kolmogorov equations backwards (\cite{[SV]}, \cite{[T]}).

\bigskip

By deriving the identities and inequalities, we intend to contribute to both analysis of PDEs and potential theory, where one deals with elliptic operators in non-divergent form.

\section{Preliminaries and notation}\label{preliminary}

\subsection{General notation}

We use the following notation:

\smallskip
\noindent
 {\bf vectors and matrices:} $\bullet$ by $v^T$ we denote the transposition of a given vector or matrix $v$ $\bullet$ when $a\in \R^n,b\in\R^n$, by $a\otimes b$ we mean the matrix $(a_ib_j)_{i,j\in\{ 1,\dots,n\}}\in \R^{n\times n}$;

\smallskip
\noindent
 {\bf divergence:} the divergence of the matrix field $\A=(\A^1,\dots , \A^n)\in \mathbf{R}^{n\times n}$ (where $A^i\in \mathbf{R}^n$) is the vector field $\operatorname{div}\A:= ({\rm div }\A^1,\dots , {\rm div }\A^n)$, where $\operatorname{div} \mathbf{A}^i$ is the standard divergence;
    
\smallskip
\noindent
 {\bf Hausdorff measure:} by $\sigma$ we denote the $n-1$ dimensional Hausdorff measure defined on $\mathbf{R}^n$, eventually restricted to its subsets; 

\smallskip
\noindent
{\bf function spaces:} $\bullet$ we use the standard notation for Sobolev spaces 
defined on $\Omega$: $W^{m,p}(\Omega,E)$, where $E$ is the target Euclidean space, omitting $E$ when $E=\mathbf{R}$
$\bullet$ by $W_{\rm loc}^{m,p}(\Omega,E)$ we denote their local counterparts
$\bullet$ by $W_0^{m,p}(\Omega ,E)$ we mean the completion of smooth compactly supported mappings in $W^{m,p}(\Omega ,E)$;

\smallskip
\noindent
 {\bf extension by zero:} when $v$ is a function defined on some subset $S$ of a Euclidean space, by $v\chi_S$ we mean the function $v$ extended by zero outside set $S$;
 
\smallskip
\noindent
 {\bf estimate notations:} by $a\lesssim b$ we mean that there exists some universal constant $C>0$ such that $a\le Cb$, its dependence can be specified; if $a\lesssim b$ and $b\lesssim a$ we denote $a\approx b$. In the key estimates, we inform about the precise constant.

\subsection{Basic assumptions}\label{basic-set-ass}

We shall now establish the common assumptions and conventions that will be used throughout the paper.

\subsubsection*{Assumptions about $\Omega$}

\begin{enumerate}[label={\bf ($\mathbf{\Omega}$)}] 
\item \label{(Omega)} We assume that $\Omega\subseteq \mathbf{R}^n$, where $n\ge 2$, is a bounded domain with Lipschitz boundary ($\Omega \in C^{0,1}$), see e.g. \cite{ma}.
By $n(x)$, we denote the outer normal to $\partial\Omega$, defined $\sigma$-almost everywhere on $\partial \Omega$.
\end{enumerate}

\subsubsection*{Assumptions about the elliptic operator}

We deal with the elliptic operator given in non-divergent form by
\begin{eqnarray}\label{pe}
Pu=\sum_{i,j}a_{i,j}(x)\frac{\partial^2 u}{\partial x_i\partial x_j}(x) \text{ a.e.~in $\Omega$, }  u\in W^{2,1}_{\rm loc}(\Omega), 
\end{eqnarray}
where we will consider diverse assumptions on the ellipticity  matrix \\$\A(x):=\{ a_{i,j}(x)\}_{i,j\in \{ 1,\dots ,n\}}\in~\mathbf{R}^{n\times n}$, as will be required by the various statements. The most basic assumption reads as follows:
\begin{enumerate}[label={\bf (A1)}]
    \item \label{(A1)} {\bf uniform ellipticity and simplest regularity:}  $\A(x)$ is symmetric and positive definite, the coefficients are continuously differentiable up to the boundary, i.e.~$a_{i,j}(\cdot) \in C^1\left ( \overline{\Omega} \right)$, that is they possess an extension to $C^1$ functions defined in some neighbourhood of $\overline{\Omega}$, satisfying  
    \begin{eqnarray*}
    \begin{aligned}
        &c_{\A}\|\xi\|^2 \le \xi^T\A(x)\xi \le C_{\A} \|\xi\|^2 ,\text{ for } c_{\A}, C_{\A}>0 \text{ independent of } x\in\overline{\Omega}\text{ and } \xi\in \mathbf{R}^n,\\
        &d_{\A} :=\| \operatorname{div} {\A}\|_{L^\infty (\Omega)}^2 c_{\A}^{-1}.
    \end{aligned}
    \end{eqnarray*}
\end{enumerate}

\noindent
{\it The corresponding norm.} For the fixed $x\in\overline{\Omega}$ we denote the norm  on  $\mathbf{R}^n$ corresponding to the matrix
$\A (x)$, induced by  scalar product $\langle z , y \rangle_{\A (x)} := z^T \A (x) y$ 
\begin{align}\label{eq: A norm definition}
\|y\|_{\A (x)}&:=\sqrt{y^T\A(x) y}, \text{ } y \in \mathbf{R}^n.
\end{align}
Let us stress that it follows from our assumption \ref{(A1)} that all the norms $\| \cdot \|_{\A(x)}$ are equivalent to the Euclidean one and this equivalence is uniform with respect to $x \in \overline{\Omega}$. We will usually omit $x$ in the notation to simplify the presentation. 

In Section~\ref{sectionSimplif} we will also use the following stronger condition for the operator $\A$.

\begin{enumerate}[label={\bf (A2)}]
\item \label{(A2)} {\bf stronger condition for
$\A(\cdot)$:} $a_{i,j}\in C^2\left ( \overline{\Omega} \right)$ and $\operatorname{div}^{(2)}\A:=\sum_{i,j\in \{1,\dots,n\} }\frac{\partial^2a_{i,j}}{\partial x_i\partial x_j}\le 0$ in $\Omega$.
\end{enumerate}

\subsubsection*{Principal weight and its transformations}
We will be using the following weights and their transformations, introduced first in \cite{akjp}.

\begin{enumerate}[label={\bf (h)}]
\item \label{(h)} {\bf principal weight:}
 When $0<B\le\infty$, by principal weight, we will mean the given continuous positive function  $h:(0, B)\to (0,\infty)$.
\end{enumerate}

\begin{itemize}
\item {\bf special transformations:}
Having given the principal weight $h$ and its first-order antiderivative $H$, that is $H'=h$,   we will deal with the following  transforms, defined for $s \in (0,B)$:
\begin{eqnarray}\label{gh}
\mathcal{T}_{H}(s):= \frac{H(s )}{h(s)},  \ \ \ 
\mathcal{G}_{H}(s):= \frac{H^2(s )}{h(s)} = \mathcal{T}_{H}^2(s)h(s). 
\end{eqnarray} 
\item Moreover, we will use the notation $\widetilde{H}$ for the second-order antiderivative of $h$, so that~$\widetilde{H}''=h$. The functions $H$ and $\widetilde{H}$ are not uniquely determined and will be specified if needed.
\end{itemize}

\begin{Remark}\rm
    Note that the positivity of $h$ implies that $H$ is strictly increasing and $\widetilde{H}$ is strictly convex.
\end{Remark}

\begin{Remark}[about $\mathcal{T}_{H}(s)$]\label{teha}\rm~
\begin{enumerate}
\item
When $H$ is not zero, then $\mathcal{T}_{H}=\tfrac{1}{(\ln |H(s)|)'}$, for $s\in (0,B)$.
\item \label{teha_ii} In many situations $\mathcal{T}_{H}(s)\approx s$ for $s\in (0, B)$.\\
Consider e.g.~$B=\infty$ and the following couples $(H,h)$:
\begin{eqnarray*}
 H(s)&: =&(\alpha+1)^{-1}
s^{\alpha+1},\  h(s) =s^\alpha, \text{ where } \alpha \neq-1;\\
H(s)&: =& s^{\alpha +1}\ln^\beta(e+s),\   h (s ) \approx s^\alpha\ln^\beta(e+s),\  \alpha>-1,\ \beta\neq 0.
\end{eqnarray*}
\item The following example shows that the transform $\mathcal{T}_{H}(s)$  can be an arbitrary positive continuous function.
Let $\tau (s )$ be any continuous positive function defined on $(0,B)$ and let $\beta (s )$ be any antiderivative of 
$\frac{1}{\tau (s)}$, that is $\beta'=\frac{1}{\tau}$.
Then the function $H(s ) := {\rm exp}(\beta (s))$ satisfies: 
$ h(s):= H'(s )= \frac{1}{\tau (s )}H(s),$ and so 
$\mathcal{T}_{H}(s) = \tau (s)$.
\end{enumerate}
\end{Remark}

\begin{Remark}[other forms of $H(s)$ and $\mathcal{G}_{H}(s)$]\rm
Observe that $\mathcal{G}_{H}(s)$ and $H(s)$ represent also as:
\begin{eqnarray}
\mathcal{G}_{H}(s)=   \mathcal{T}^2_{H}(s)h(s)= \mathcal{T}_{H}(s)H(s),   \ \ \ 
H(s)=  \mathcal{T}_{H}(s)h(s),  \  s \in (0,B).\label{secondtransform1}
\end{eqnarray}
\end{Remark}

\begin{Remark}[weight transforms]\rm 
In the special case when $h$ was integrable near $0$,
 for $C\in\mathbf{R}$, the following notation was used in \cite{akjp}, 
\begin{eqnarray*}
H_C(s)&:=&\int_0^s h(t)\:\dee t -C,\ \  s\in[0,B),\ \ \ 
\mathcal{T}_{h,C} :=\frac{H_C(s)}{h(s)}.
 \label{weighttransf}
\end{eqnarray*}
Note that  $H_C$  is of class $C^1$ on $(0, B)$ and that $H_C$ can be continuously extended to zero, which might not be the case of   $h$.
\end{Remark}

\subsection{Main assumptions}\label{assumptions}

\noindent
Firstly, we present what is the general assumption in most of our results, namely Theorems~\ref{LemmaBoundedU}, \ref{LemmaBoundedUcoinfty}, \ref{structureA}, and \ref{simplification2}:

\begin{enumerate}[label={\bf (G)}]
\item \label{(G)}$\Omega\subseteq \mathbf{R}^n$, $P$,  $\A(\cdot)$,  $h(\cdot)$ defined on $(0,B)$, where $0<B\le\infty$,
  as in: \ref{(Omega)}, $\eqref{pe}$, \ref{(A1)}, \ref{(h)},
$H$ is the given antiderivative of $h$.
\end{enumerate}
\noindent
The weakest version of our main result, namely Theorem~\ref{LemmaBoundedU}, is proved under the following assumption:

\begin{enumerate}[label={\bf (u)}]
\item \label{(u-H)} $u\in W_{\rm loc}^{2,1}(\Omega)$, $u(x)\in (0,B)$ almost everywhere in $\Omega$, for every antiderivative $\widetilde{H}$ of $H$ we have 
$\widetilde{H} (u)\in W^{2,1}(\Omega)$. Moreover, $Pu \; H(u)\in L^1(\Omega)$.
\end{enumerate}

\begin{Remark}[equivalent form of \ref{(u-H)}]\label{equiv-Pu}\rm
Let us note that the condition $Pu \; H(u)\in L^1(\Omega)$ in the assumption \ref{(u-H)} above could be equivalently substituted by the condition $h(u)\|\nabla u\|^2_{\A}\in L^1(\Omega)$, this is a consequence of the formula \eqref{pointwise-identity} from the proof of Theorem~\ref{LemmaBoundedU}.    
\end{Remark}

\begin{Remark}[the choice of second-order antiderivative in \ref{(u-H)} ]\label{o-widehat}\rm
Any two antiderivatives of $H$ defined on $(0, B)$ can only differ by a constant. Therefore, in the assumption \ref{(u-H)} in place of the expression ``for every antiderivative $\widetilde{H}$ of $H$'' we could write as well ``for some antiderivative $\widetilde{H}$ of $H$''.
\end{Remark}

In Section \ref{composition-sobolev}, precisely in Examples 
\ref{zlozenie} and \ref{homework1}, we show that  the assumption $\widetilde{H}(u)\in W^{2,1}(\Omega)$ implies neither $u\in W^{2,1}(\Omega)$ nor $u\in W^{2,1}_{\rm loc}(\Omega)$.

\bigskip

\noindent
The next assumption is used in Theorems~\ref{LemmaBoundedUcoinfty}, \ref{structureA}, \ref{Opial}, and \ref{simplification2} to allow for $u \equiv 0$ or $u\equiv B$ on sets of positive measure in $\Omega$:

\begin{enumerate}[label={\bf (I)}]
\item \label{(I-H)} $I$ is one of the intervals
$(0,B),[0,B)$, $(0,B]$ or $[0,B]$; the last two cases are allowed only when $B<\infty$. $H'=h$ on $(0,B)$ and some (so also any) antiderivative $\widetilde{H}$ of $H$    continuously extends from $(0,B)$ to $I$.  
We additionally assume that $I$ is maximal possible subinterval of $[0,B]$  such that $\widetilde{H}$ has a continuous extension on $I$.
Such an extension will be also denoted by $\widetilde{H}$ when there is no risk of confusion.
\end{enumerate}

An extended version of the next Remark, which contains detailed arguments for the claims, can be found in Appendix~\ref{section Appendix}, Remark~\ref{rem-ii}.

\begin{Remark}[about the condition \ref{(I-H)}]\label{rem-ii_abbrv} \rm
    Recall that $H$ is strictly increasing on $(0,B)$, and thus $\widetilde{H}$ is strictly convex. In particular, $\widetilde{H}$ is strictly monotone near the endpoints of $(0,B)$ and the limits $\lim_{s\to 0_+}\widetilde{H}(s)$ and $\lim_{s\to B_-}\widetilde{H}(s)$ exist, finite or not. If either of the limits is finite, then $\widetilde{H}$ can be continuously extended to the corresponding endpoint by setting the value to be the respective limit. Moreover, $H$ is integrable near either of the endpoints if and only if the respective limit is finite, in which case we may  for example construct an antiderivative of $H$ for $s \in I$ as either the Hardy or conjugate-Hardy transform of $H$.

     On the other hand, we never require that the functions $H$ and $h$ can be extended to $I$.
\end{Remark}

\noindent
The condition below appears in Theorems:~\ref{LemmaBoundedUcoinfty}, \ref{structureA}, and \ref{simplification2}: 

\begin{enumerate}[label={\bf (u-I)}]
\item  \label{(u-I-H)}  $u\in W_{\rm loc}^{2,1}(\Omega)$, $u(x)\in I$ almost everywhere in $\Omega$, where
$I$ is as in \ref{(I-H)} and for $\widetilde{H}$ satisfying $\widetilde{H}'=H$ on $(0,B)$, it holds $\widetilde{H}(u)\in W^{2,1}(\Omega)$.  Moreover, $Pu \; H(u)\chi_{u\in(0,B)}\in L^1(\Omega)$. 
\end{enumerate}

\begin{Remark}[about the condition \ref{(u-I-H)} ]\rm
By the same arguments as in Remark \ref{equiv-Pu} we  note  that the condition  $Pu \; H(u)\chi_{u\in (0,B)}\in L^1(\Omega)$, in the assumption \ref{(u-I-H)} above, could be equivalently substituted by the condition $h(u)\|\nabla u\|^2_{\A}\chi_{u\in (0,B)}\in L^1(\Omega)$.    
\end{Remark}

\noindent
Theorem~\ref{structureA} also requires the following Dirichlet-type condition, which will be analysed in Section~\ref{u-tildeh}, Theorem~\ref{RemAn(u-TildeH-1)}:
\begin{enumerate}[label={\bf (u-$\widetilde{\text{H}}$)}]
\item \label{(u-TildeH)} The assumptions \ref{(I-H)} and  \ref{(u-I-H)} hold and 
$H$  
possesses the  antiderivative $\widetilde{H}$ defined on $I$ such that
\begin{equation*}
\widetilde{H}\ge 0\text{ on }  (0,B), \text{ } \widetilde{H}(u) \in W^{2,1}(\Omega), \text{ and }    \widetilde{H}(u)\equiv 0 \text{ on }  \partial\Omega.
\end{equation*}
\end{enumerate}

\noindent

Finally, in Theorems~\ref{Opial} and \ref{simplification2} we exploit the following property of $h$ and its given first and second-order antiderivatives $H$ and $\widetilde{H}$, respectively, which seems to be of independent interest:

\begin{enumerate}[label={\bf ($\mathcal{G}_H$)}]
\item \label{(h-H-TildeH)} There exists a constant $C_{\widetilde{H}}>0$ such that the following condition holds: 
\begin{eqnarray*}
\mathcal{G}_H(s)=  \frac{H^2(s)}{h(s)}\le C_{\widetilde{H}}|\widetilde{H}(s)|,
\end{eqnarray*}		   
where $\mathcal{G}_H$ is defined in \eqref{gh}.
 \end{enumerate}

\begin{Remark}[about the condition \ref{(h-H-TildeH)} ]
\rm This condition is equivalent to the estimate
\begin{equation}\label{punktmulti}
|\widetilde{H}^{'}(s)|^2 \le C_{\widetilde{H}} |\widetilde{H}(s)||\widetilde{H}^{''}(s)|.
\end{equation}
An interpretation of this estimate is that the first-order derivative of $\widetilde{H}$, i.e.~$H$, is controlled from above by the geometric mean of $\widetilde{H}$ and $h = \widetilde{H}^{''}$.

Note that \eqref{punktmulti} can be interpreted as a pointwise Gagliardo--Nirenberg-type inequality. However, it does not hold in general, but some its modified variants can be found in literature. Usually, they involve the maximal operator or other averaging operators, see e.g.~\cite{agnieszka-pointwise, LRS, mazya-shap, MazyaKufner86}.

The class of functions satisfying \eqref{punktmulti} includes the power-, power-logarithmic-, and exponential-type functions, e.g.~$\widetilde{H}(s)=s^\alpha$, $\widetilde{H}(s)=s^\alpha(\ln(2+s))^\beta$, $\widetilde{H}(s)=e^{\beta s^\alpha}$, for proper choices of the parameters involved. 

\end{Remark}

\subsection{Properties of Sobolev functions}\label{SectionPropSobolevFunc}

\subsubsection*{The pointwise value of Sobolev function}
When $w\in W^{1,1}(\Omega)$, $\Omega$ being as in \ref{(Omega)}, then its value can be defined at every point $x\in \overline{\Omega}$ by  choosing the canonical Borel representative of $w$, given by the formula
\begin{equation}\label{valueatpoint}
w(x):=\limsup_{r\to 0_+} \frac{1}{|\Omega\cap B(x,r)|}\int_{\Omega\cap B(x,r)} w(y)\dee y.
\end{equation}
It is known that such a representative coincides with any other one ${\mathcal L}^n$-almost everywhere in $\Omega$ and it coincides $\sigma$-almost everywhere on $\partial\Omega$ with the trace of $w$, see e.g. \cite{anzoletti-giaquinta}. In particular, this definition also applies when the value of $w$ is prescribed only inside $\Omega$, and can be used to define the trace of $w$ on $\partial\Omega$.

For an in-depth treatment of the trace theory, we refer the reader to e.g.~\cite[Section~18]{Leoni17} or \cite[Section~6]{KufnerJohn77}. However, we will only need the most classical result, which is the existence of the trace operator for Lipschitz domains (e.g.~\cite[Theorem~18.1]{Leoni17} or \cite[Theorem~6.4.1]{KufnerJohn77}). Further comments are presented in the proof of Lemma \ref{lem-app1} in Section \ref{section Appendix}.

\subsubsection*{Poincar\'e inequality}

We will use  the following variant of  Poincar\'e inequality (see \cite[Theorem~13.19]{Leoni17}):
\begin{equation}\label{poincare}
\int_\Omega |w(x)|\dee x \le C_P\int_\Omega \|\nabla w (x)\|\dee x,\text{ where } w\in W_0^{1,1}(\Omega),
\end{equation}
which holds for $\Omega$ satisfying \ref{(Omega)} and where $C_P>0$ does not depend on $w$.

In some cases, we will also need the more general version that assumes only $w \in W^{1,1}(\Omega)$ (instead of $w\in W_0^{1,1}(\Omega)$):
\begin{equation*}
\int_\Omega |w(x) - w_{E}|\dee x \le C \int_\Omega \|\nabla w (x)\|\dee x,
\end{equation*}
where $C$ does not depend on $w$, $E \subseteq \Omega$ is an arbitrary measurable set of finite non-zero measure, and
\begin{equation*}
    w_{E} = \frac{1}{| E |} \int_{E} w(x) \dee x,
\end{equation*}
which holds for $\Omega$ satisfying \ref{(Omega)}. See e.g.  \cite[Theorem~13.27]{Leoni17} for details.

\begin{Remark}[Equivalence of homogeneous and inhomogeneous Sobolev spaces on regular domains] \label{Poincare_equiv}\rm
   We recall that it is a consequence of the general Poincar{\' e} inequality that, on sufficiently regular bounded domains (e.g.~those satisfying \ref{(Omega)}), any locally integrable function $w$, whose distributional gradient satisfies  $\nabla w \in L^p(\Omega, \R^n)$, belongs itself to $L^p(\Omega)$, where $p \in [1, \infty)$. See e.g.~\cite[Section 1.1.11]{ma}.
\end{Remark}

\subsubsection*{ACL characterisation property}

 We will need the following variant of Nikodym ACL
Characterisation Theorem, which can be found
e.g. in \cite[Section 1.1.3]{ma}. Let us note that it holds in the Euclidean space but can be applied appropriately inside the regular domain as well.

\begin{Theorem}[Nikodym ACL Characterisation Theorem]\label{nikos}\phantom{a}
\begin{enumerate}
\item Let  $w\in~W^{1,1}_{\rm loc}(\rn)$. Then for every $i\in \{
1,\dots ,n\}$ and for almost every $a\in \R^{i-1}\times \{
0\}\times \R^{n-i}$ the function
\begin{equation}\label{tratt}
\R\ni t\mapsto w(a+te_i)
\end{equation}
is locally absolutely continuous on $\R$. In particular, for almost
every point $x\in\R^n$ the distributional derivative $\frac{\partial w}{\partial x_i}$ is
the same as the classical derivative at  $x$.
\item
Assume that $w\in L^1_{\rm loc}(\R^n)$ and
for every $i\in \{ 1,\dots,n\}$ and for almost every $a\in \R^{i-1}\times \{
0\}\times \R^{n-i}$ the function in \eqref{tratt} is
locally absolutely continuous on $\R$ and all the derivatives
$\frac{\partial w}{\partial x_i}$ computed almost everywhere
are locally integrable on $\R^n$. Then $w$ belongs to
$W^{1,1}_{\rm loc}(\R^n)$.
\item
Let $1\le p\le \infty$,   $w\in W^{1,1}_{\rm loc}(\rn)$ and $\Omega\subset \R^n$ be an open
subset. Then $w$ belongs to $W^{1,p}(\Omega)$ if and only if $w\in L^p(\Omega)$ and
every derivative $\frac{\partial w}{\partial x_i}$ computed almost everywhere belongs to the space $L^{p}(\Omega)$.
\end{enumerate}
\end{Theorem}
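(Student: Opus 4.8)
The plan is to establish the three parts in sequence, using mollification together with Fubini's theorem for the forward implication~i), the one-dimensional fundamental theorem of calculus for the converse~ii), and then reading off~iii) essentially from the definition of $W^{1,p}$. For~i), by the local nature of the claim it suffices to fix an open cube $Q$ with $w\in W^{1,1}(Q)$ and work on a slightly smaller cube $Q'\Subset Q$. I would take mollifications $w_k:=w\ast\rho_{1/k}$, which for large $k$ are smooth on $Q'$ and satisfy $w_k\to w$ and $\nabla w_k\to\nabla w$ in $L^1(Q')$. Fix $i\in\{1,\dots,n\}$ and write points of $Q'$ as $(a,t)$, with $a$ in the projection of $Q'$ onto $e_i^\perp$ and $t$ in the corresponding segment $I_a$. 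By Fubini, $\int\bigl(\int_{I_a}|w_k-w|+|\partial_i w_k-\partial_i w|\dee t\bigr)\dee a\to 0$, so along a subsequence the inner integral tends to $0$ for a.e.\ $a$. For such $a$ the functions $t\mapsto w_k(a,t)$ are smooth, converge to $w(a,\cdot)$ in $L^1(I_a)$, and have derivatives converging in $L^1(I_a)$; the Cauchy estimate $|(w_k-w_j)(a,t)-(w_k-w_j)(a,s)|\le\int_{I_a}|\partial_i w_k-\partial_i w_j|\dee t$ shows that, after subtracting the value at a point $s$ where $L^1$-convergence holds pointwise, the sequence converges \emph{uniformly} on $I_a$. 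Hence $w(a,\cdot)$ has a representative equal to the indefinite integral of the $L^1$-limit of $\partial_i w_k(a,\cdot)$; that representative is locally absolutely continuous, and its classical derivative coincides a.e.\ with $\partial_i w(a,\cdot)$. Intersecting the finitely many ($i=1,\dots,n$) full-measure sets and exhausting $\R^n$ by cubes gives~i), including the identification of the a.e.\ classical and distributional partials.

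For~ii), I would check directly that the a.e.-defined classical partials serve as distributional derivatives. Fix $\varphi\in C_0^\infty(\R^n)$ and $i\in\{1,\dots,n\}$; since $w\in L^1_{\rm loc}$ and $\partial_i w\in L^1_{\rm loc}$, Fubini applies to $\int_{\R^n}w\,\partial_i\varphi\dee x$, reducing it to line integrals $\int_{\R}w(a,t)\,\partial_i\varphi(a,t)\dee t$. By hypothesis $w(a,\cdot)$ is locally absolutely continuous for a.e.\ $a$, hence so is the product $w(a,\cdot)\varphi(a,\cdot)$, which has compact support; the one-dimensional integration-by-parts formula then yields $\int_{\R}w(a,t)\,\partial_i\varphi(a,t)\dee t=-\int_{\R}(\partial_i w)(a,t)\,\varphi(a,t)\dee t$. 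Integrating over $a$ and applying Fubini again gives $\int_{\R^n}w\,\partial_i\varphi\dee x=-\int_{\R^n}(\partial_i w)\,\varphi\dee x$; thus the distributional gradient of $w$ equals its classical gradient, which lies in $L^1_{\rm loc}$, so $w\in W^{1,1}_{\rm loc}(\R^n)$. (The only side task is the routine measurability of the a.e.-defined classical partials, which holds since they are a.e.\ limits of difference quotients.)

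For~iii): if $w\in W^{1,p}(\Omega)$ the stated conditions hold by definition, while if $w\in L^p(\Omega)$ and every classically computed $\partial_i w$ belongs to $L^p(\Omega)$, then by part~i) these functions are also the distributional partials of $w$ on $\Omega$, so $w$ together with its distributional gradient lies in $L^p(\Omega)$, i.e.\ $w\in W^{1,p}(\Omega)$. The substantive step is part~i): $L^1$-convergence of the mollifications on almost every line does not by itself force the limit to be absolutely continuous, and the uniform-Cauchy argument powered by the $L^1$-convergence of the derivatives is exactly what supplies this; everything else is bookkeeping of null sets, Fubini, and the fundamental theorem of calculus. One may of course bypass the argument altogether and simply invoke \cite[Section~1.1.3]{ma}.
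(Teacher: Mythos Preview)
Your argument is correct and follows the standard route (mollification plus Fubini for~i), line-by-line integration by parts for~ii), and the definition for~iii)). Note, however, that the paper does not prove this theorem at all: it is stated as a known result and simply referred to \cite[Section~1.1.3]{ma}, exactly the reference you mention at the end of your proposal. So there is no ``paper's own proof'' to compare against; your sketch essentially reproduces the classical argument one finds in that reference.
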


\noindent

The following easy consequence of the ACL characterisation is left to the reader.

\begin{Lemma}\label{lcomp}
If $f: [-R,R]\rightarrow \R$ is absolutely continuous with values in the interval $[\alpha,\beta]$ and $L: [\alpha,\beta] \rightarrow\R$ is a  Lipschitz function, then
the function $(L\circ f)(x):=L(f(x))$ is absolutely continuous on $[-R,R]$. In particular, if 
$L$ is locally Lipschitz and $w$ belongs to $W^{1,1}_{\rm loc}(\Omega)$, where $\Omega\subseteq \Rn$ is an open domain,  then $L\circ w$ belongs to $W^{1,1}_{\rm loc}(\Omega)$.
\end{Lemma}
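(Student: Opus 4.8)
\textbf{Proof proposal for Lemma \ref{lcomp}.}

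The plan is to prove the first assertion directly from the definition of absolute continuity, and then deduce the second (the statement about $W^{1,1}_{\rm loc}$) by combining it with the ACL characterisation (Theorem \ref{nikos}) and the chain rule for absolutely continuous functions. First I would fix $\varepsilon > 0$ and let $\mathrm{Lip}(L)$ denote a Lipschitz constant for $L$ on $[\alpha, \beta]$; if $\mathrm{Lip}(L) = 0$ there is nothing to prove, so assume it is positive. Since $f$ is absolutely continuous on $[-R, R]$, there is a $\delta > 0$ such that for every finite collection of pairwise disjoint subintervals $(x_k, y_k) \subseteq [-R, R]$ with $\sum_k |y_k - x_k| < \delta$ one has $\sum_k |f(y_k) - f(x_k)| < \varepsilon / \mathrm{Lip}(L)$. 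Then for the same collection of intervals,
\begin{equation*}
\sum_k |L(f(y_k)) - L(f(x_k))| \le \mathrm{Lip}(L) \sum_k |f(y_k) - f(x_k)| < \varepsilon,
\end{equation*}
which is exactly absolute continuity of $L \circ f$ on $[-R, R]$. (Here I use that $f$ takes values in $[\alpha, \beta]$, so that $L$ is actually evaluated only at points where it is Lipschitz.)

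For the second assertion, suppose $L$ is locally Lipschitz on $\R$ and $w \in W^{1,1}_{\rm loc}(\Omega)$. I would verify that $L \circ w \in W^{1,1}_{\rm loc}(\Omega)$ by checking the hypotheses of part ii) of Theorem \ref{nikos}, applied on small cubes compactly contained in $\Omega$ (the ACL theorem is stated on $\R^n$, but as remarked there it applies inside a domain by localisation). First, $L \circ w$ is locally bounded, hence locally integrable: on any cube $Q \Subset \Omega$, by part i) of Theorem \ref{nikos} the function $w$ is, for a.e.\ line parallel to a coordinate axis, absolutely continuous on the corresponding segment — but more simply, $w \in W^{1,1}_{\rm loc}$ implies $w$ is a.e.\ finite, and one can restrict attention to the (measurable, exhausting) sets $\{|w| \le m\}$ on which $L \circ w$ is bounded by $\sup_{[-m,m]} |L|$; integrability of $L \circ w$ over $Q$ then follows because $w$ is finite a.e.\ and $L$ is continuous, combined with a standard truncation argument. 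Second, for the absolute continuity on lines: by part i) of Theorem \ref{nikos}, for each $i$ and a.e.\ $a$, the map $t \mapsto w(a + t e_i)$ is locally absolutely continuous, with values in some bounded interval on each compact $t$-segment; by the first assertion of this lemma, $t \mapsto L(w(a + t e_i))$ is then locally absolutely continuous as well. Finally, the classical chain rule for absolutely continuous functions gives, a.e.\ on each such line, $\frac{\partial}{\partial x_i}(L \circ w) = L'(w) \frac{\partial w}{\partial x_i}$ wherever $L$ is differentiable at $w$; since $L$ is locally Lipschitz, $L'$ is bounded on bounded sets, so $|L'(w) \frac{\partial w}{\partial x_i}| \le C_m |\frac{\partial w}{\partial x_i}|$ on $\{|w| \le m\}$, which is locally integrable. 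Assembling these pieces and invoking part ii) of Theorem \ref{nikos} yields $L \circ w \in W^{1,1}_{\rm loc}(\Omega)$.

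The one genuine subtlety — and the step I expect to require the most care — is the chain rule almost everywhere on lines: the naive identity $(L \circ f)' = L'(f) f'$ can fail at points where $f' \ne 0$ but $L$ is not differentiable at $f(t)$, unless one knows that the set of such bad parameter values has measure zero. For the $W^{1,1}_{\rm loc}$ conclusion, however, one does not actually need the precise formula for the derivative — only the absolute continuity of $L \circ w$ on a.e.\ line (which the first part of the lemma already supplies) together with local integrability of its a.e.-derivative. The latter follows from the Lipschitz bound $|(L \circ f)(s) - (L \circ f)(t)| \le \mathrm{Lip}_{\rm loc}(L) \, |f(s) - f(t)|$, which forces $|(L \circ f)'(t)| \le \mathrm{Lip}_{\rm loc}(L) \, |f'(t)|$ at a.e.\ $t$ (by dividing by $|s - t|$ and passing to the limit at points of differentiability of both sides). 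So the cleanest route avoids the chain-rule formula altogether and argues purely via the Lipschitz bound, which is why I would present it that way and relegate the explicit derivative formula, if desired, to a remark.
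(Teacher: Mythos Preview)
The paper does not prove this lemma --- it is ``left to the reader'' --- so there is nothing to compare against directly; your ACL-based approach is precisely what the authors have in mind.

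Your proof of the first assertion (absolute continuity of $L\circ f$) is correct and is the standard $\varepsilon$--$\delta$ argument.

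For the second assertion, however, there is a genuine gap at the integrability step, and in fact the statement as written does not hold without an extra hypothesis. Your truncation argument (``restrict to $\{|w|\le m\}$'') shows only that $L\circ w$ is bounded on each such set, not that it is integrable over a cube $Q\Subset\Omega$; and your line-by-line bound $|(L\circ f)'(t)|\le \mathrm{Lip}_{\rm loc}(L)\,|f'(t)|$ carries a constant that depends on $\sup|f|$ along that particular line, so it does not assemble into an $L^1(Q)$ estimate. A concrete counterexample in $\R^2$: on $\Omega=B(0,e^{-1})$ take $w(x)=\log\log(1/|x|)$, which lies in $W^{1,1}_{\rm loc}(\Omega)$ since $|\nabla w|=\big(|x|\log(1/|x|)\big)^{-1}\in L^1(\Omega)$, and take $L(s)=\exp(2e^{s})$, which is $C^\infty$ and hence locally Lipschitz. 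Then $(L\circ w)(x)=|x|^{-2}\notin L^1_{\rm loc}(\Omega)$, so $L\circ w\notin W^{1,1}_{\rm loc}(\Omega)$.

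The natural fix is to add a hypothesis --- either $L$ globally Lipschitz on $\R$, or $w\in L^\infty_{\rm loc}(\Omega)$ --- under which your ACL argument goes through verbatim (with $|L\circ w|\le |L(0)|+\mathrm{Lip}(L)|w|$ and $|\partial_i(L\circ w)|\le \mathrm{Lip}(L)|\partial_i w|$). In the paper's actual applications the essential content is really the first part of the lemma (line-by-line absolute continuity and the resulting a.e.\ chain rule), while the global Sobolev regularity of the relevant compositions is supplied separately by standing assumptions such as $\widetilde H(u)\in W^{2,1}(\Omega)$.
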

\section{Derivation of the identity and inequalities} \label{SectionBasicResults}

\subsection{First simplest approach} \label{SectionFirsResult}

At first, we deal with the simplest assumptions about the principal non-linearity $h$; that is, we do not care if it is defined at the endpoints of the interval $(0, B)$. On the other hand, the admitted function $u$ must satisfy the assumption $u\in(0, B)$ a.e.~in $\Omega$. 

\smallskip
\noindent
Our first statement reads as follows.

\begin{Theorem}[the identity and the inequalities]\label{LemmaBoundedU}
Let the assumptions \ref{(G)} and \ref{(u-H)} be satisfied. Then the following properties hold:   
\begin{enumerate}
\item \label{Theorem_identity_inequality_i} {\bf the identity:}
\begin{equation} \label{identity}
\begin{split}
         \int_{\Omega } \|\nabla u\|_{\A}^{2} h(u(x))\dee x  =& -\int_{\Omega} Pu \; H(u) \dee x   \\
     & -\int_{\Omega } \operatorname{div} \A\cdot \nabla u \; H (u) \dee x +\Theta, \text{ where} \\
\Theta =& \int_{\partial\Omega}  n(x)^T \A(x) \nabla  (\widetilde{H} (u))\dee \sigma (x). 
\end{split}
\end{equation}
Moreover, all the involved integrals converge. 

\item \label{Theorem_identity_inequality_ii} {\bf the inequalities:} 
\begin{eqnarray}\label{ineqal-simple}
   \int_{\Omega } \|\nabla u\|_{\A}^{2} h(u)\dee x  
&\le&   \int_{\Omega } |Pu||H(u)| \dee x + \Theta \text{ when } \operatorname{div}{\A}\equiv 0, \\
   \int_{\Omega } \|\nabla u\|_{\A}^{2} h(u)\dee x  
&\le&  d_{\A} \int_{\Omega }\mathcal{G}_{H}(u)\dee x + 2\int_{\Omega } |Pu||H(u)| \dee x + 2\Theta \text{ otherwise,}\nonumber  \\\label{ineqality}
\end{eqnarray}
where $d_{\A}$ is as in \ref{(A1)}.
\end{enumerate}
\end{Theorem}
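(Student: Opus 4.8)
The natural strategy is to derive everything from a pointwise identity and then integrate it over $\Omega$, using the hypothesis $\widetilde H(u)\in W^{2,1}(\Omega)$ to justify an integration by parts against the elliptic operator $P$. First I would record the chain-rule computations available for Sobolev functions: since $H$ is locally Lipschitz (being $C^1$ on $(0,B)$) and $u\in W^{2,1}_{\rm loc}(\Omega)$ with values in $(0,B)$ a.e., Lemma~\ref{lcomp} and a second application of the ACL characterisation give $H(u)\in W^{1,1}_{\rm loc}(\Omega)$ with $\nabla(H(u))=h(u)\nabla u$ a.e., and similarly $\nabla(\widetilde H(u))=H(u)\nabla u$ a.e. Applying $\operatorname{div}$ (and being careful to justify it, which is where the assumption $\widetilde H(u)\in W^{2,1}(\Omega)$ enters — it tells us $\widetilde H(u)$ has a genuine second-order weak gradient, even though $u$ itself need not be in $W^{2,1}_{\rm loc}$) one obtains the key pointwise identity
\begin{equation}\label{pointwise-identity}
\frac{\partial^2}{\partial x_i\partial x_j}\bigl(\widetilde H(u)\bigr)=h(u)\,\frac{\partial u}{\partial x_i}\frac{\partial u}{\partial x_j}+H(u)\,\frac{\partial^2 u}{\partial x_i\partial x_j}\quad\text{a.e. in }\Omega.
\end{equation}
Contracting \eqref{pointwise-identity} with the matrix $\A(x)$ yields
\[
\sum_{i,j}a_{i,j}\,\frac{\partial^2}{\partial x_i\partial x_j}\bigl(\widetilde H(u)\bigr)=h(u)\,\|\nabla u\|_{\A}^2+H(u)\,Pu\quad\text{a.e.},
\]
which already shows, via \ref{(A1)}, the equivalence asserted in Remark~\ref{equiv-Pu} between $Pu\,H(u)\in L^1$ and $h(u)\|\nabla u\|_{\A}^2\in L^1$, so that under \ref{(u-H)} every term is integrable.

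Next I would integrate the contracted identity over $\Omega$. The left-hand side is $\int_\Omega \sum_{i,j}a_{i,j}\,\partial_{ij}\bigl(\widetilde H(u)\bigr)\dee x$; I would integrate by parts \emph{twice}, moving both derivatives off $\widetilde H(u)$ and onto the $C^1$ coefficients $a_{i,j}$, keeping track of the boundary integrals produced by each integration by parts. Since $\widetilde H(u)\in W^{2,1}(\Omega)$ and $\Omega$ is a bounded Lipschitz domain with $a_{i,j}\in C^1(\overline\Omega)$, the divergence theorem for Sobolev functions on Lipschitz domains (the trace theory cited in Section~\ref{SectionPropSobolevFunc}) applies. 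The first integration by parts gives $-\int_\Omega \partial_j\bigl(\widetilde H(u)\bigr)\,\partial_i a_{i,j}\dee x - \int_\Omega \partial_j\bigl(\widetilde H(u)\bigr)\,a_{i,j}\,(\text{derivative absorbed})$ plus $\int_{\partial\Omega} n_i a_{i,j}\partial_j(\widetilde H(u))\dee\sigma$; organizing the bulk terms and using $\nabla(\widetilde H(u))=H(u)\nabla u$ one recognizes $-\int_\Omega (\operatorname{div}\A)\cdot\nabla u\,H(u)\dee x$ coming from the terms where the derivative hit a coefficient, while the remaining bulk term, after a second integration by parts, produces $-\int_\Omega Pu\,H(u)\dee x$ together with a second boundary contribution. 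Collecting the boundary terms gives exactly $\Theta=\int_{\partial\Omega}n(x)^T\A(x)\nabla(\widetilde H(u))\dee\sigma(x)$, and one arrives at identity~\eqref{identity}; convergence of all integrals is guaranteed by \ref{(u-H)} together with the integrability equivalence noted above and the fact that the boundary term makes sense because $\nabla(\widetilde H(u))$ has an $L^1(\partial\Omega)$ trace.

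For part~\ref{Theorem_identity_inequality_ii} I would simply estimate the right-hand side of \eqref{identity}. When $\operatorname{div}\A\equiv 0$ the middle term vanishes and bounding $-\int_\Omega Pu\,H(u)\dee x\le\int_\Omega|Pu||H(u)|\dee x$ gives \eqref{ineqal-simple} at once. In the general case I need to absorb $-\int_\Omega(\operatorname{div}\A)\cdot\nabla u\,H(u)\dee x$; here the natural move is Cauchy–Schwarz with a weight chosen so that the $\|\nabla u\|_{\A}^2 h(u)$ part can be absorbed into the left-hand side: writing $|(\operatorname{div}\A)\cdot\nabla u\,H(u)|\le \|\operatorname{div}\A\|\,\|\nabla u\|\,|H(u)| = \bigl(\|\operatorname{div}\A\|\,\frac{|H(u)|}{\sqrt{h(u)}}\bigr)\bigl(\|\nabla u\|\sqrt{h(u)}\bigr)$ and applying Young's inequality $ab\le \tfrac14\lambda a^2+\tfrac{1}{\lambda}b^2$ with $\lambda$ tuned to the ellipticity constant $c_{\A}$ (so that $\|\nabla u\|^2 h(u)\le c_{\A}^{-1}\|\nabla u\|_{\A}^2 h(u)$), one gets a term $\tfrac12\int_\Omega\|\nabla u\|_{\A}^2 h(u)\dee x$ to move to the left and a term $\tfrac12 d_{\A}\int_\Omega \mathcal G_H(u)\dee x$ (recalling $d_{\A}=\|\operatorname{div}\A\|_{L^\infty}^2 c_{\A}^{-1}$ and $\mathcal G_H(u)=H^2(u)/h(u)$), plus $\le\int_\Omega|Pu||H(u)|\dee x+\Theta$ from the other two terms; multiplying through by $2$ yields \eqref{ineqality}.

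The main obstacle I anticipate is the rigorous justification of the double integration by parts in the bulk, precisely because $u\notin W^{2,1}_{\rm loc}(\Omega)$ in general — only $\widetilde H(u)\in W^{2,1}(\Omega)$ is assumed. One has to be careful that the identity \eqref{pointwise-identity} holds almost everywhere (which requires combining the ACL characterisation applied to $\widetilde H(u)$ with the chain rule for $\nabla(\widetilde H(u))$ and then differentiating that relation distributionally), and that when integrating by parts one never needs second derivatives of $u$ in isolation — they always appear in the combination $Pu$, which is controlled by \ref{(u-H)}. A secondary technical point is ensuring the boundary integral $\Theta$ is well defined: this needs the trace of $\nabla(\widetilde H(u))\in L^1(\Omega,\R^n)$ on $\partial\Omega$, which is available since $\widetilde H(u)\in W^{2,1}(\Omega)$ implies $\nabla(\widetilde H(u))\in W^{1,1}(\Omega,\R^n)$, and the trace theorem for Lipschitz domains applies; I would also remark, following Remark~\ref{o-widehat}, that the choice of second antiderivative $\widetilde H$ only shifts $\widetilde H(u)$ by a constant and hence affects neither the bulk integrals nor $\Theta$.
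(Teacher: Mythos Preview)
Your overall strategy matches the paper's: derive the pointwise identity $P(\widetilde H(u)) = h(u)\|\nabla u\|_{\A}^2 + H(u)\,Pu$ a.e.\ via the ACL characterisation and chain rule, integrate, and use the divergence theorem to produce $\Theta$ and the $\operatorname{div}\A$ term; part~\ref{Theorem_identity_inequality_ii} by a Cauchy--Schwarz/Young splitting to absorb half of the left-hand side is exactly what the paper does (the paper phrases it as Cauchy--Schwarz in the measure $h(u)\dee x$ followed by Young at the level of the integrals, which is equivalent to your pointwise Young argument).

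The one place your write-up is muddled is the ``double integration by parts''. Only \emph{one} application of the divergence theorem is needed: since $\A\nabla(\widetilde H(u))\in W^{1,1}(\Omega,\R^n)$,
\[
\int_\Omega \operatorname{div}\bigl(\A\nabla(\widetilde H(u))\bigr)\dee x=\Theta,
\]
and the integrand expands pointwise as $(\operatorname{div}\A)\cdot\nabla u\,H(u)+Pu\,H(u)+h(u)\|\nabla u\|_{\A}^2$. Equating the two and rearranging gives \eqref{identity} directly. The term $-\int_\Omega Pu\,H(u)\dee x$ does \emph{not} arise from a second integration by parts (nor is there a ``second boundary contribution''); it is already sitting on the right-hand side of the integrated pointwise identity and one simply moves it across. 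Your description of the first IBP producing two bulk terms, one with ``(derivative absorbed)'', is not correct as written: one IBP in $x_i$ on $\int_\Omega a_{i,j}\partial_{ij}(\widetilde H(u))\dee x$ yields a single bulk term $-\int_\Omega(\partial_i a_{i,j})\partial_j(\widetilde H(u))\dee x$ plus the single boundary term $\Theta$. This is an organisational confusion rather than a gap in the strategy, but if you followed your sketch literally you would not arrive at the identity.
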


\noindent
Before we present the proof, we start with the following remarks.

\begin{Remark}[computation of boundary term]\label{tetaintegrability}\rm
The value of $\nabla(\widetilde{H}(u))$ at $\partial\Omega$
 in \eqref{identity} is computed by the formula \eqref{valueatpoint}. It is well defined and integrable over $\partial\Omega$ as $\nabla(\widetilde{H}(u)) \in W^{1,1}(\Omega,\R^n)$, and we know that $\nabla (\widetilde{H}(u))= \nabla u \;  H(u)$ in $\Omega$. Note that our statement does not depend on the choice of $\widetilde{H}$.
 \end{Remark}
 
\begin{Remark}[importance of the term $\int_{\Omega } \mathcal{G}_{H}(u)\dee x$]\label{G-finite}\rm
Note that the term $d_{\A}\int_{\Omega } \mathcal{G}_{H}(u)\dee x$ is zero if the operator $P$ has constant coefficients or, more generally, if $\operatorname{div}\A \equiv 0$. We are not claiming that the term $\int_{\Omega } \mathcal{G}_{H}(u)\dee x$ is finite.
We will discuss later, in Section~\ref{sectionSimplif}, the situations when the term
$\int_{\Omega } \mathcal{G}_{H}(u)\dee x$ is finite and can be omitted from the inequality \eqref{ineqality}.
\end{Remark}

\begin{Remark}[interpretation of  the term $\int_{\Omega } |Pu||H(u)| \dee x$]\rm
The term $\int_{\Omega } |Pu||H(u)|\dee x$ can be rewritten as
$$
\int_{\Omega } |Pu||H(u)|\dee x= \int_{\Omega } |Pu||\mathcal{T}_{H}(u)| h(u)\dee x 
= \int_{\Omega } \left( \sqrt{|Pu||\mathcal{T}_{H}(u)|}\right)^2 h(u)\dee x. 
$$
Assuming that the function $H$ is non-decreasing and satisfies
\begin{equation}
    |\mathcal{T}_{H}(s)|\lesssim s, \ \ \ {\rm equivalently}\ \frac{|H(s)|}{s} \lesssim H^{'}(s),\   {\rm where}\  s\in (0,B) \label{T<u},
    \end{equation} we obtain from \eqref{ineqality} the estimate
$$
\int_{\Omega } \|\nabla u\|_\A^2 h(u)\dee x \lesssim \int_{\Omega } |u|^2 h(u)\dee x +
 \int_{\Omega } \left( \sqrt{|Pu||u|}\right)^2 h(u)\dee x+\Theta,
$$
where the geometric mean value on the right-hand side of the inequality, i.e.~$\sqrt{|Pu||u|}$, replaces $\|\nabla u\|_{\A}$ on the left-hand side of the inequality. The condition \eqref{T<u} in the stronger version, that is $|\mathcal{T}_{H}(s)|\approx s$, has already been discussed in Remark~\ref{teha}, part~\ref{teha_ii}, where we presented examples among power and power-logarithmic functions. In addition to these examples, let us add that any  non-negative convex function $H$ such that $H(0)=0$ also satisfies \eqref{T<u}.
\end{Remark}

\begin{proof}[Proof of Theorem~\ref{LemmaBoundedU}]\ref{Theorem_identity_inequality_i}
The proof follows by steps.

\smallskip
\noindent
{\sc Step 1.} We first derive the pointwise identity
\begin{eqnarray}\label{pointwise-identity} 
L^1(\Omega)\ni  P (\widetilde{H}(u)) = h(u)\|\nabla u\|_{\A}^2 + H(u)Pu
=: \alpha(u) + \beta(u) \text{ a.e. in } \Omega ,
\end{eqnarray}
then we show that all terms involved in \eqref{identity} are finite.

By our assumption $\widetilde{H}(u)\in W^{2,1}(\Omega)$. Using the ACL characterisation property (Theorem~\ref{nikos} and Lemma~\ref{lcomp}),  we deduce that 
$\widetilde{H}(u)$ is absolutely continuous on almost all lines in $\Omega$ 
parallel to the coordinate axes and its distributional derivative can be computed almost everywhere. The same argument applies to

\begin{equation} \label{Def_v}
    v:=\nabla \widetilde{H}(u)=H(u)\nabla u\in W^{1,1}(\Omega,\R^n).
\end{equation}

Therefore, for almost every $x\in \Omega$, we have
\begin{equation}\label{tozsamosc1}
\nabla \widetilde{H}(u) = H(u)\nabla u, \ \ \
\nabla^{(2)} \widetilde{H}(u)= h(u)\nabla u\otimes \nabla u + H(u)\nabla^{(2)}u.
\end{equation}
This implies \eqref{pointwise-identity},  after we scalarly multiply 
second identity in \eqref{tozsamosc1} by the matrix $\A(\cdot)$, using the standard scalar product of matrices $\langle A,B\rangle =\sum_{i,j}a_{i,j}b_{i,j}$. 

By our assumptions, the terms $P(\widetilde{H}(u))$ and $\beta(u)$ in \eqref{pointwise-identity} are both integrable over $\Omega$. Therefore also $\alpha(u)$  is integrable over $\Omega$. Finally, it follows from \eqref{Def_v} and the classical Trace Theorem that $v \in L^1(\partial\Omega,\R^n)$, and so
$\Theta$ is finite (see Remark \ref{tetaintegrability}). Hence, all terms involved in \eqref{identity} are finite.

\smallskip
\noindent
{\sc Step 2.} We prove the integral identity \eqref{identity}.\\
As $v\in W^{1,1}(\Omega, \mathbf{R}^n)$ (see \eqref{Def_v}), so does $\A v= \A\nabla (\widetilde{H}(u)) = H(u)\A\nabla u$, because $\A\in C^1(\overline{\Omega},\mathbf{R}^{n\times n})$. We can compute its divergence, getting
\begin{equation}
\R \ni \int_{\Omega} \operatorname{div}(\A v) \dee x  =
\int_{\Omega} \operatorname{div}\left( \A \nabla \left ( \widetilde{H}(u) \right )\right) \dee x 
= \int_{\partial\Omega}  n(x)^T \A(x)  \nabla  (\widetilde{H}(u) )\dee \sigma (x)=\Theta . \label{brzegowy} 
\end{equation}

Notice that 
$$
\operatorname{div}(\A \nabla u)= \A\cdot \nabla^{(2)}u + \operatorname{div}\A\cdot \nabla u  = Pu + \operatorname{div}\A\cdot \nabla u\ \hbox{\rm~~a.e.~in}\ \Omega .
$$
It is easy to see that $\A\nabla u\in W^{1,1}_{\rm loc}(\Omega,\R^n)$, while $H(u)\in W^{1,1}_{\rm loc}(\Omega)$ by Lemma~\ref{lcomp}. Hence, we may apply the ACL characterisation (see Theorem~\ref{nikos}) to the terms involved in the equation above and compute that for almost every $x\in\Omega$
\begin{eqnarray*}
\operatorname{div}\left( \A \nabla \widetilde{H}(u)\right) &=&
\operatorname{div}\left( (\A \nabla u)H(u)\right) \nonumber\\
&=& \operatorname{div} (\A \nabla u) H(u)
+ (\A \nabla u) \cdot \nabla H(u)\nonumber\\
&=& (\operatorname{div}\A\cdot\nabla u) H(u) + Pu H(u) + \|\nabla u\|_{\A}^2 h(u).
\end{eqnarray*}
When we integrate it over $\Omega$ and apply \eqref{brzegowy} we derive  \eqref{identity}, and so the statement \ref{Theorem_identity_inequality_i} is proven.

\bigskip
\noindent
\ref{Theorem_identity_inequality_ii} The inequality \eqref{ineqal-simple} follows from \eqref{identity}. Assume then that $d_{\A}>0$, that is 
$\operatorname{div}\A\not\equiv 0$.  We can additionally assume that $\int_{\Omega}
 \mathcal{G}_{H}(u) \dee x$ is finite, as otherwise the inequality obviously holds.
Let us denote
\begin{align*}
&\mathcal{I}:=\left( \int_{\Omega } \|\nabla u\|_{\A}^{2} h(u(x))\dee x \right) ^{\frac{1}{2}}, &\mathcal{J} :=  \int_{\Omega } |Pu||H(u)| \dee x,  &&\mathcal{K} := \left( \int_{\Omega }
 \mathcal{G}_{H}(u)\dee x\right)^{\frac{1}{2}}.
\end{align*}
By the Cauchy--Schwarz inequality applied with respect to the measure $\mu = h(u)\dee x$, using \eqref{secondtransform1}, we get
\begin{align}
\left \lvert \int_{\Omega} \operatorname{div} \A\cdot \nabla u \; H(u) \dee x \right \rvert &\le \| \operatorname{div} \A\|_{L^\infty (\Omega)} 
 \int_{\Omega } \|\nabla u\| |\mathcal{T}_{H}(u)|  h (u) \dee x\label{a}\\
&\le  \| \operatorname{div} \A\|_{L^\infty (\Omega)} \left( \int_{\Omega } \|\nabla u\|^2h (u) \dee x\right)^{\frac{1}{2}} \left( \int_{\Omega}
 \mathcal{G}_{H}(u) \dee x\right)^{\frac{1}{2}}
\nonumber \\
&\stackrel{\ref{(A1)}}{\le} \| \operatorname{div} \A\|_{L^\infty (\Omega)}  c_{\A}^{-\frac{1}{2}}\left( \int_{\Omega} \|\nabla u\|_{\A}^2h (u) \dee x\right)^{\frac{1}{2}} \left( \int_{\Omega }
 \mathcal{G}_{H}(u) \dee x\right)^{\frac{1}{2}} \nonumber \\
&= d_{\A}^{\frac{1}{2}} \; \mathcal{I} \; \mathcal{K}.
\nonumber 
  \end{align}

Let us denote
$$
c:= \mathcal{J}+\Theta, \ \ b:= d_{\A}^{\frac{1}{2}}  \mathcal{K} =  \| \operatorname{div} \A\|_{L^\infty (\Omega)}  c_{\A}^{-\frac{1}{2}}\left( \int_{\Omega }
 \mathcal{G}_{H}(u)   \dee x\right)^{\frac{1}{2}}  .
$$
The identity \eqref{identity} and the estimate \eqref{a} imply:
\begin{eqnarray}\label{rownanieI}
\mathcal{I}^2 \le c + \mathcal{I}b.
\end{eqnarray}

Using Young's inequality  $\mathcal{I}b\le \frac{1}{2}\mathcal{I}^2 +
\frac{1}{2}b^2$, and recalling the finiteness of $\mathcal{I}$ (as  deduced in Step 1), we obtain the required inequality
$$
\mathcal{I}^2\le 2c+ b^2.
$$
\end{proof}

\begin{Remark}[estimates for $\Theta$]\rm
The validity of  \eqref{rownanieI} for finite $\mathcal{I}$ implies that the terms $b$ and $c$ satisfy the estimate
\begin{equation*}
	\frac{b^2}{4} + c \geq 0.
\end{equation*}
Note that this estimate is non-trivial, as $c$ might be negative.
In particular, under the assumptions of Theorem~\ref{LemmaBoundedU}, the following trace-type inequality involving composition operator holds:
\begin{equation*}
-\int_{\partial\Omega}  n(x)^T \A(x) \nabla   (\widetilde{H}(u(x)))\dee \sigma (x)\le \frac{d_{\A}}{4} \int_{\Omega}
 \mathcal{G}_{H}(u(x)) \dee x + 
\int_{\Omega} |Pu(x)||H(u(x))| \dee x.
\end{equation*}
It seems that such an inequality might be of independent interest.
\end{Remark}

\subsection{Generalisation which admits $u \equiv 0$ or $u\equiv B$ on sets of positive measure in $\Omega$} \label{SectionSecondResult}

In our previous statement, we have assumed that $u\in(0, B)$ a.e.~inside $\Omega$. Such an assumption looks rather strong, as it excludes, for example, smooth functions compactly supported inside $\Omega$. 
To include this situation, we apply some extra assumptions about the principal non-linearity $h$. So far $h$ has been defined on $(0, B)$ only, so we have not been able to consider $u$ having the values either $0$ or $B$ on the set of positive measure inside $\Omega$. As we show in this section, such functions can be admitted to the inequalities when we know that the second antiderivative of $h$, that is $\widetilde{H}$, which is a priori defined on $(0, B)$, can be continuously extended to some of the endpoints of $(0, B)$. If such an endpoint is $0$, the generalisation includes smooth compactly supported functions, under some extra regularity assumptions. 

In our next result we will require the additional assumptions \ref{(I-H)} and \ref{(u-I-H)} as formulated in Section~\ref{assumptions}. We also recall Remark~\ref{rem-ii_abbrv} which provides context for the assumption \ref{(I-H)}.

We have the following statement, which generalises Theorem~\ref{LemmaBoundedU}.

\begin{Theorem}[generalization admitting $u\in \{ 0,B\}$]\label{LemmaBoundedUcoinfty}
Assume that \ref{(G)}, \ref{(I-H)}, and \ref{(u-I-H)} hold. Then we have:
\begin{enumerate}
\item {\bf the identity:}
\begin{equation} \label{identity1} 
\begin{aligned} 
    \int_{\Omega \cap \{ u\in(0,B)\} } \|\nabla u\|_{\A}^{2} h(u(x))\dee x  =& -\int_{\Omega\cap \{ u\in(0,B)\}  } Pu \; H(u) \dee x  \\
    &-\int_{\Omega \cap \{ u\in(0,B)\} } \operatorname{div} \A\cdot \nabla u \;  H (u) \dee x +\Theta, \text{ where} \\
\Theta :=& \int_{\partial\Omega }  n(x)^T \A(x) \nabla ( \widetilde{H} (u))\dee \sigma (x).    
\end{aligned}
\end{equation}

Moreover, all the involved integrands are integrable over their domains.

\item {\bf the inequalities:}
\begin{equation}\label{ineqality1}\begin{aligned}
  \int_{\Omega \cap \{ u\in(0,B)\} } \|\nabla u\|_{\A}^{2} h(u)\dee x  
\le&   \int_{\Omega \cap \{ u\in(0,B)\}   } |Pu||H(u)|\dee x + \Theta, \text{ when } \operatorname{div}{\A}\equiv 0, \\
   \int_{\Omega \cap \{ u\in(0,B)\} } \|\nabla u\|_{\A}^{2} h(u)\dee x  
  \le& 2\int_{\Omega \cap \{ u\in(0,B)\}   } |Pu||H(u)|\dee x + 2\Theta\\&+ d_{\A} \int_{\Omega  \cap \{ u\in(0,B)\} }\mathcal{G}_{H}(u)\dee x \text{ otherwise}, 
\end{aligned}\end{equation}
where $d_{\A}$ is as in \ref{(A1)}.
\end{enumerate}
\end{Theorem}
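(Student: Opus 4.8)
The plan is to reduce Theorem~\ref{LemmaBoundedUcoinfty} to Theorem~\ref{LemmaBoundedU} by localising to the open set $\Omega_0 := \Omega \cap \{u \in (0,B)\}$, where the nonlinearity $h$ is well-defined along $u$, and then carefully transferring the integral identity from $\Omega_0$ back to $\Omega$ — the key point being that $\widetilde H(u) \in W^{2,1}(\Omega)$ globally (by assumption \ref{(u-I-H)}), so the boundary term over $\partial\Omega$ still makes sense even though $\Omega_0$ need not be a nice domain. First I would record the structural facts about $\Omega_0$: it is open (a.e.~sense aside, one uses the precise representative), and on it $u$ takes values in $(0,B)$ so that $H(u)$, $h(u)$, $\mathcal G_H(u)$ are all defined pointwise a.e. The crucial observation is that $v := \nabla(\widetilde H(u)) \in W^{1,1}(\Omega,\R^n)$ with $v = H(u)\nabla u$ \emph{a.e.~on $\Omega_0$} and, by the continuity of $\widetilde H$ on $I$ together with the chain rule for Sobolev functions, $v = 0$ a.e.~on $\Omega \setminus \Omega_0 = \{u \in \{0,B\}\}$ — this is exactly the content behind allowing $u$ to hit the endpoints: on level sets $\{u = c\}$ of positive measure, $\nabla u = 0$ a.e., and the continuous extension of $\widetilde H$ guarantees $\widetilde H(u)$ is still $W^{2,1}$ across the interface.

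The main computation proceeds as in the proof of Theorem~\ref{LemmaBoundedU}. I would first establish the pointwise identity
\[
P(\widetilde H(u)) = h(u)\|\nabla u\|_\A^2 + H(u)\,Pu \quad \text{a.e. on } \Omega_0,
\qquad P(\widetilde H(u)) = 0 \quad \text{a.e. on } \Omega \setminus \Omega_0,
\]
the second part because $\nabla(\widetilde H(u)) = 0$ a.e.~on the level sets and hence $\nabla^{(2)}(\widetilde H(u)) = 0$ a.e.~there too (a standard fact: if $w \in W^{2,1}$ and $\nabla w = 0$ on a set $S$, then $\nabla^{(2)} w = 0$ a.e.~on $S$). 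Then I integrate $\operatorname{div}(\A\nabla(\widetilde H(u)))$ over all of $\Omega$: the divergence theorem applies since $\A\nabla(\widetilde H(u)) \in W^{1,1}(\Omega,\R^n)$ and $\Omega$ is Lipschitz, yielding the boundary term $\Theta = \int_{\partial\Omega} n(x)^T\A(x)\nabla(\widetilde H(u))\dee\sigma(x)$ exactly as before. Expanding the divergence via the ACL chain rule splits the left side into the interior integral over $\Omega_0$ (where all three terms $h(u)\|\nabla u\|_\A^2$, $Pu\,H(u)$, $\operatorname{div}\A\cdot\nabla u\,H(u)$ appear) plus a contribution over $\Omega\setminus\Omega_0$ that vanishes term by term since $\nabla u = 0$ a.e.~there. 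This gives the identity \eqref{identity1}. Integrability of each integrand over $\Omega_0$ follows as in Step~1 of Theorem~\ref{LemmaBoundedU}: $P(\widetilde H(u)) \in L^1(\Omega)$ since $\widetilde H(u) \in W^{2,1}(\Omega)$, the term $Pu\,H(u)\chi_{u\in(0,B)} \in L^1$ by \ref{(u-I-H)}, hence $h(u)\|\nabla u\|_\A^2\chi_{u\in(0,B)} \in L^1$, and then the mixed term is controlled by Cauchy--Schwarz as in \eqref{a}.

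For the inequalities \eqref{ineqality1}: the case $\operatorname{div}\A \equiv 0$ is immediate from \eqref{identity1}. For the general case I repeat verbatim the argument from part~\ref{Theorem_identity_inequality_ii} of Theorem~\ref{LemmaBoundedU}, with every integral now taken over $\Omega_0 = \Omega \cap \{u\in(0,B)\}$ instead of $\Omega$: set $\mathcal I, \mathcal J, \mathcal K$ with this restricted domain, use the Cauchy--Schwarz estimate \eqref{a} with measure $h(u)\dee x$ on $\Omega_0$ together with \eqref{secondtransform1} to bound $|\int_{\Omega_0}\operatorname{div}\A\cdot\nabla u\,H(u)\dee x| \le d_\A^{1/2}\mathcal I\mathcal K$, obtain $\mathcal I^2 \le c + \mathcal I b$ with $c = \mathcal J + \Theta$, $b = d_\A^{1/2}\mathcal K$, and conclude $\mathcal I^2 \le 2c + b^2$ via Young's inequality, using the finiteness of $\mathcal I$ established above (and, as before, we may assume $\mathcal K < \infty$ since otherwise the inequality is trivial).

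The main obstacle I anticipate is the rigorous handling of the level sets $\{u = 0\}$ and $\{u = B\}$ — specifically, justifying that $\nabla(\widetilde H(u))$ and $\nabla^{(2)}(\widetilde H(u))$ vanish a.e.~on these sets when they have positive measure, and that the chain-rule identity $\nabla(\widetilde H(u)) = H(u)\nabla u$ (which a priori only makes sense on $\Omega_0$ since $H$ is defined only on $(0,B)$) is consistent with the global $W^{1,1}$ function $\nabla(\widetilde H(u))$ obtained from the hypothesis $\widetilde H(u) \in W^{2,1}(\Omega)$. This rests on the standard but slightly delicate facts that (i) for $w \in W^{1,1}$, $\nabla w = 0$ a.e.~on $\{w = \text{const}\}$, applied to $w = u$ restricted suitably, and (ii) the continuity of the extension of $\widetilde H$ on $I$ (assumption \ref{(I-H)}) is what makes $\widetilde H(u)$ land in $W^{2,1}(\Omega)$ rather than merely in $W^{2,1}_{\mathrm{loc}}(\Omega_0)$; once these are in place the rest is a routine adaptation of Theorem~\ref{LemmaBoundedU}.
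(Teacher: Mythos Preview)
Your proposal is correct and follows essentially the same approach as the paper's proof: both arguments observe that $\nabla(\widetilde H(u))$ and $\nabla^{(2)}(\widetilde H(u))$ vanish a.e.\ on $\{u\in\{0,B\}\}$ (the paper phrases this as those sets being level sets of $\widetilde H(u)$), derive the pointwise identity $P(\widetilde H(u)) = \big(h(u)\|\nabla u\|_\A^2 + H(u)Pu\big)\chi_{u\in(0,B)}$, and then integrate over all of $\Omega$ with the divergence theorem to recover $\Theta$ as the boundary term, after which the inequalities follow by the same Cauchy--Schwarz/Young argument restricted to $\Omega\cap\{u\in(0,B)\}$. One small remark: you need not worry about whether $\Omega_0$ is open --- the paper, like your argument in substance, simply multiplies by $\chi_{u\in(0,B)}$ and keeps all integrals over $\Omega$, so no topological properties of $\Omega_0$ are used.
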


\noindent

\begin{proof}[Proof of Theorem~\ref{LemmaBoundedUcoinfty}] 
The case of $I=(0, B)$ was proven in Theorem~\ref{LemmaBoundedU}.
The proof of the more general statement is similar to the proof of Theorem~\ref{LemmaBoundedU}. We only sketch it, pointing out the following modifications:
\begin{itemize}
\item The condition $\widetilde{H}(u)\in W^{2,1}(\Omega)$ implies that for almost every $x\in\Omega$
\[
\nabla \widetilde{H}(u) (x)= \left\{
\begin{array}{ccc}
H(u(x))\nabla u (x) & \text{ if } & u(x)\in (0,B)\\
0  & \text{ if } & u(x)\in \{ 0,B\}
\end{array}
\right.
= H(u(x))\nabla u (x)\chi_{u(x)\in (0,B)} .
\]
This is because of the chain rule and the fact that sets:
$\{ x: u(x)=0\}$, $\{ x: u(x)=B\}$ are subsets of level sets of $\widetilde{H}(u)$. 
Observe that $ H(u(x))\nabla u (x)\chi_{u(x)\in (0,B)}\in W^{1,1}(\Omega,\R^n)$, so we can define its trace by using the formula \eqref{valueatpoint}.

\item As $ H(u(x))\nabla u (x)\chi_{u(x)\in (0, B)}\in W^{1,1}(\Omega,\R^n)$, we deduce from the ACL characterisation property (see Theorem~\ref{nikos} and Lemma~\ref{lcomp}) that for almost every $x \in \Omega$
\begin{equation*}
	\nabla^{(2)} \widetilde{H}(u)= (h(u)\nabla u\otimes \nabla u + H(u)\nabla^{(2)}u) \; \chi_{u(x)\in (0,B)}.
\end{equation*}

\item In place of \eqref{pointwise-identity}
 we now have
 \begin{eqnarray}
L^1(\Omega)\ni  P (\widetilde{H}(u)) = h(u)\|\nabla u\|_{\A}^2 \; \chi_{u\in (0,B)} + H(u)Pu \; \chi_{u\in (0,B)}\nonumber\\
=: \alpha(u) + \beta(u),  \text{ a.e.~in } \Omega .\nonumber
\end{eqnarray}
\item Consequently, all the integrals that were originally considered over $\Omega$ are now restricted to 
$\Omega\cap \{ u\in (0,B)\}$.
\end{itemize}
\end{proof}

\subsection{Analysis of the condition $\widetilde{H}(u)\in W^{2,1}(\Omega)$}\label{composition-sobolev}

In the following two examples, we show that the condition

$\widetilde{H}(u)\in W^{2,1}(\Omega)$ does not imply $u\in W^{2,1}(\Omega)$, nor
$u\in W^{2,1}_{\rm loc}(\Omega)$. In Example \ref{zlozenie} we have $\widetilde{H}(u)\in W^{2,1}_0(\Omega)$ while $u\to \infty$ as $x\to \partial\Omega$.
Example \ref{homework1} shows that it might happen that $u\not\in W^{2,1}_{\rm loc}(\Omega)$, while 
$\widetilde{H}(u)\in W^{2,1}(\Omega)$.

\begin{ex}[$u\not\in W^{2,1}(\Omega)$, $\widetilde{H}(u)\in W^{2,1}_0(\Omega)$]\label{zlozenie}\em 
Let us consider $\Omega = B(0,1)\subseteq \R^n$, $n\ge 2$ and 
$$
u(x):= (1-|x|)^\alpha,\text{ where } \alpha <0,\ \ h(s)=s^\beta,\ \ \beta<0, \beta\not\in \{ -1,-2\}.$$
Such function does not belong to $W^{2,1}(\Omega)$. The argument is that the trace defined via \eqref{valueatpoint} is infinite, which is impossible for functions in $W^{2,1}$. However, $u$ is strictly positive and it belongs to $W^{2,1}_{\rm loc}(\Omega)$. On the other hand, when we take
$\widetilde{H}(s):= \frac{1}{(\beta +1)(\beta+2)}s^{\beta+2}$,  we have for $x\neq 0$
\begin{eqnarray*}
\widetilde{H}(u(x))= \frac{1}{(\beta +1)(\beta+2)}(|1-|x|)^{\alpha(\beta+2)},\, \nabla \widetilde{H}(u(x))
= \frac{-\alpha}{(\beta +1)}(1-|x|)^{\alpha(\beta+2)-1}
\frac{x}{| x|}.
\end{eqnarray*}
Further, denoting $\gamma_1:= \frac{\alpha(\alpha (\beta+2)-1)}{\beta +1}$ and $\gamma_2 := \frac{-\alpha}{\beta +1}$, we obtain
\begin{eqnarray*}
\frac{\partial^2}{\partial x_i\partial x_j}\widetilde{H}(u(x))
= \gamma_1 (1-|x|)^{\alpha (\beta+2)-2}\frac{x_ix_j}{|x|^2} + \gamma_2(1-|x|)^{\alpha (\beta+2)-1}\left(\frac{\delta_{ij}|x|^2 - x_ix_j}{|x|^3}    \right)\\
=:v_1+v_2,\text{ where } v_1\approx (1-|x|)^{\alpha (\beta+2)-2},\ \ v_2\approx (1-|x|)^{\alpha (\beta+2)-1}\frac{1}{|x|}.
\end{eqnarray*}

It is integrable over $\Omega$ (hint: use radial coordinates, the dimension does not matter)
provided that 
\begin{equation}\label{beta}
\alpha (\beta+2)-2>-1\Longleftrightarrow \beta <\frac{1}{\alpha}-2.
\end{equation}

In that case $\widetilde{H}(u)\in W^{2,1}(\Omega)$ and we easily verify that $\widetilde{H}(u)\in W^{2,1}_0(\Omega)$. However,  $u\not\in W^{2,1}(\Omega)$. Note also that $u\not\in W^{1,1}(\Omega)$, so that $\nabla u$ 
cannot be defined at $\partial\Omega$ as the trace of $W^{1,1}$ function. On the other hand,  $\nabla u H(u)$ belongs to
$W^{1,1}(\Omega)$, so its trace is defined at $\partial\Omega$.
In our case, the function $h(\cdot)$ does not continuously extend to zero, but $\widetilde{H}(u)$ does so when $\beta$ is as in \eqref{beta}.
\\
Note also that when $\alpha <-1$, then both $u(x) \to \infty$ and $|\nabla u (x)| \to \infty$ as $x \to \partial \Omega$.
\end{ex}

\begin{ex}[$u\not\in W^{2,1}_{\rm loc}(\Omega)$, $\widetilde{H}(u)\in W^{2,1}(\Omega)$]\label{homework1} \rm
In one dimensional case we can consider for example $\Omega= (-1,1)$, $u(x)={\rm sgn} (x)|x|^{\frac{1}{2}+\varepsilon}+1\in (0,2)$, where $0<\varepsilon<\frac{1}{2}$,  and $\widetilde{H}(s)=(s-1)^2$. In the case of general  dimension $n$, we can consider
$\Omega = (-1,1)^n$, 
$u(x_1,\dots,x_n)={\rm sgn} (x_1)|x_1|^{\frac{1}{2}+\varepsilon}+1$, with the same $\varepsilon$ and $\widetilde{H}$.
\end{ex}

\section{Simplifications of  \eqref{ineqality} and \eqref{ineqality1}  when $\operatorname{div}\A\not\equiv 0$}\label{sectionSimplif}

We shall now discuss possible simplifications of the inequalities \eqref{ineqality} and \eqref{ineqality1}. By simplification, we mean  
erasing the term $d_{\A}\int_{\Omega\cap \{ u\in(0, B)\} } \mathcal{G}_H(u)\dee x$, under some additional assumptions and at the cost of possible enlargement of the constants in the inequalities. Notice that when ${\rm div }\A\equiv 0$, such simplifications are unnecessary as this term is already zero. We propose two methods, as discussed below. 

\subsection{Simplifications based on sign conditions for $\operatorname{div}^{(2)}\A$ and  $\widetilde{H}(u)$ and on the Dirichlet condition $\widetilde{H}(u)\equiv 0$ on $\partial \Omega$}\label{simpl-sign}

We start from the following result, which mainly relies on the additional assumptions about the ellipticity matrix $\A$ and on the Dirichlet boundary condition and positive sign property for the composition $\widetilde{H}(u)$. We have the following result, which allows us to erase the terms $\Theta$ and 
$\int_{\Omega\cap \{ u\in(0,B)\}}\mathcal{G}_H(u)\dee x$ from the inequality \eqref{ineqality1}.

\smallskip
\noindent

For this, we will use the  conditions \ref{(A2)} and \ref{(u-TildeH)} introduced in Sections \ref{basic-set-ass} and \ref{assumptions}.

\begin{Theorem}[inequalities based on Dirichlet and sign conditions]\label{structureA}
Let the assumptions \ref{(G)}, \ref{(I-H)}, \ref{(u-I-H)}, and \ref{(u-TildeH)} hold, then:
\begin{enumerate}
\item \label{Theorem_Dirichlet_Sign_ii} We have

$$
\Theta =\int_{\partial\Omega }  n(x)^T \A(x) \nabla  \widetilde{H}(u(x))\dee \sigma (x)\le 0.
$$

\item \label{Theorem_Dirichlet_Sign_i} Under the assumption \ref{(A2)}:
\begin{align}
&\int_{\Omega \cap \{ u\in(0,B)\} } \operatorname{div}\A(x)\cdot  \nabla u(x) H(u(x))\dee x \ge 0,\label{a)} \\
&\int_{\Omega \cap \{ u\in(0,B)\}} \|\nabla u(x)\|_{\A}^{2} h(u(x)) \dee x  
\le  \int_{\Omega \cap \{ u\in(0,B)\}} |Pu (x)||H(u(x))|\dee x. \label{ineqality111}
\end{align}
\end{enumerate}
\end{Theorem}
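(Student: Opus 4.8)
The plan is to deduce Theorem~\ref{structureA} from the identity \eqref{identity1} of Theorem~\ref{LemmaBoundedUcoinfty} by establishing sign information on the two ``error'' terms, namely the boundary term $\Theta$ and the divergence term $\int_{\Omega\cap\{u\in(0,B)\}}\operatorname{div}\A\cdot\nabla u\,H(u)\dee x$. Indeed, once we know $\Theta\le 0$ and that the divergence integral is $\ge 0$, the identity \eqref{identity1} immediately gives
\[
\int_{\Omega\cap\{u\in(0,B)\}}\|\nabla u\|_\A^2 h(u)\dee x = -\int_{\Omega\cap\{u\in(0,B)\}}Pu\,H(u)\dee x - \underbrace{\int \operatorname{div}\A\cdot\nabla u\,H(u)\dee x}_{\ge 0} + \underbrace{\Theta}_{\le 0} \le \int_{\Omega\cap\{u\in(0,B)\}}|Pu||H(u)|\dee x,
\]
which is precisely \eqref{ineqality111}. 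So the whole proof reduces to the two sign claims.

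For part \ref{Theorem_Dirichlet_Sign_ii}, i.e.\ $\Theta\le 0$, I would argue as follows. Under \ref{(u-TildeH)} we have $\widetilde{H}\ge 0$ on $(0,B)$, $\widetilde{H}(u)\in W^{2,1}(\Omega)$, and $\widetilde{H}(u)\equiv 0$ on $\partial\Omega$ (in the trace sense via \eqref{valueatpoint}). Write $w:=\widetilde{H}(u)\in W^{1,1}(\Omega)$, so $w\ge 0$ a.e.\ in $\Omega$ and $w=0$ $\sigma$-a.e.\ on $\partial\Omega$; thus $w$ attains its boundary minimum. The boundary integrand is $n(x)^T\A(x)\nabla w(x)$, which (since $\A$ is symmetric, so $n^T\A\nabla w=\langle \A n,\nabla w\rangle$) is, up to a positive factor, the derivative of $w$ in the co-normal direction $\A n$, a direction pointing outward (because $n^T\A n = \|n\|_\A^2 > 0$ by ellipticity \ref{(A1)}). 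Since $w\ge 0$ inside and $w=0$ on the boundary, this outward co-normal derivative should be $\le 0$ at $\sigma$-a.e.\ boundary point. Making this rigorous at the level of $W^{2,1}$ traces is the delicate point: the cleanest route is probably to use the divergence-theorem representation \eqref{brzegowy}, $\Theta=\int_\Omega\operatorname{div}(\A\nabla w)\dee x$, combined with testing against the constant function $1$ in a limiting/approximation argument, or to invoke the detailed trace analysis deferred to Theorem~\ref{RemAn(u-TildeH-1)} in Section~\ref{u-tildeh}; I would cite that result here rather than reprove it. This is the step I expect to be the main obstacle, since it requires carefully handling one-sided co-normal derivatives of a merely $W^{2,1}$ function on a Lipschitz boundary.

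For part \ref{Theorem_Dirichlet_Sign_i}, the sign of the divergence integral, I would integrate by parts to move the divergence off of $\A$. Using $v:=\nabla\widetilde{H}(u)=H(u)\nabla u\,\chi_{u\in(0,B)}\in W^{1,1}(\Omega,\R^n)$ (established in the proof of Theorem~\ref{LemmaBoundedUcoinfty}), observe that $\operatorname{div}\A\cdot\nabla u\,H(u)\chi_{u\in(0,B)} = \operatorname{div}\A\cdot v$, and that under \ref{(A2)} the coefficients are $C^2$ so $\operatorname{div}\A\in C^1(\overline\Omega,\R^n)$ with $\operatorname{div}(\operatorname{div}\A) = \operatorname{div}^{(2)}\A \le 0$ in $\Omega$. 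Hence
\[
\int_\Omega \operatorname{div}\A\cdot v\dee x = \int_\Omega \operatorname{div}\A\cdot\nabla\widetilde{H}(u)\dee x = -\int_\Omega (\operatorname{div}^{(2)}\A)\,\widetilde{H}(u)\dee x + \int_{\partial\Omega}\widetilde{H}(u)\,(\operatorname{div}\A)\cdot n\dee\sigma.
\]
The boundary term vanishes because $\widetilde{H}(u)\equiv 0$ on $\partial\Omega$ (again by \ref{(u-TildeH)}), and the interior term is $-\int_\Omega(\operatorname{div}^{(2)}\A)\widetilde{H}(u)\dee x \ge 0$ since $\operatorname{div}^{(2)}\A\le 0$ and $\widetilde{H}(u)\ge 0$. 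This gives \eqref{a)}. I should double-check the regularity needed to justify the integration by parts: $\widetilde{H}(u)\in W^{1,1}(\Omega)$ with a well-defined trace, and $\operatorname{div}\A$ is a $C^1$ vector field, so the Gauss--Green formula applies; and I must be slightly careful that $\operatorname{div}\A\cdot v$ coincides with the integrand in \eqref{a)}, which follows from the chain-rule formula for $\nabla\widetilde{H}(u)$ recorded in the proof of Theorem~\ref{LemmaBoundedUcoinfty}. Finally, assembling \ref{Theorem_Dirichlet_Sign_ii} and \ref{Theorem_Dirichlet_Sign_i} into \eqref{identity1} yields \eqref{ineqality111}, completing the proof.
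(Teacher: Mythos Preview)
Your overall strategy and your argument for part~\ref{Theorem_Dirichlet_Sign_i} coincide with the paper's proof: the paper also integrates by parts to write
\[
\int_{\Omega\cap\{u\in(0,B)\}}\operatorname{div}\A\cdot\nabla u\,H(u)\dee x
=\int_{\partial\Omega}(\operatorname{div}\A\cdot n)\,\widetilde H(u)\dee\sigma
-\int_\Omega(\operatorname{div}^{(2)}\A)\,\widetilde H(u)\dee x,
\]
kills the boundary term via \ref{(u-TildeH)}, and uses \ref{(A2)} together with $\widetilde H(u)\ge 0$ for the sign; then \eqref{ineqality111} follows from \eqref{identity1} exactly as you say.

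For part~\ref{Theorem_Dirichlet_Sign_ii} the paper's argument is a little more structured than your co-normal heuristic, and you have pointed to the wrong auxiliary result. The paper first proves (deferred to Lemma~\ref{lem-app1} in the Appendix, \emph{not} Theorem~\ref{RemAn(u-TildeH-1)}) that for $w:=\widetilde H(u)\in W_0^{1,1}(\Omega)\cap W^{2,1}(\Omega)$ on a Lipschitz domain, the trace $\nabla w$ is $\sigma$-a.e.\ \emph{parallel to} $n$, say $\nabla w(x)=\alpha(x)n(x)$. Then
\[
n(x)^T\A(x)\nabla w(x)=\alpha(x)\,\|n(x)\|_\A^2,
\qquad
\alpha(x)=\langle\nabla w(x),n(x)\rangle
=\lim_{t\to 0_+}\frac{w(x-tn(x))}{-t}\le 0,
\]
the last inequality because $w\ge 0$ in $\Omega$ and $w=0$ on $\partial\Omega$. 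Your co-normal formulation ``$\partial w/\partial(\A n)\le 0$ because $\A n$ points outward'' is morally the same, but note that $\A n$ generally has a tangential component, so this step implicitly uses that the tangential part of $\nabla w$ vanishes---which is precisely the content of Lemma~\ref{lem-app1}. Theorem~\ref{RemAn(u-TildeH-1)} is about the uniqueness of the boundary value $T$ of $u$ and does not supply this information. Once you replace your citation by Lemma~\ref{lem-app1} and make the ``$\nabla w\parallel n$'' step explicit, your proof matches the paper's.
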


\begin{proof}
\ref{Theorem_Dirichlet_Sign_ii}  By our assumptions, $\partial\Omega$ is contained in the level set of $w:= \widetilde{H}(u)$. Let us note that $\nabla w=\nabla \widetilde{H}(u)= H(u)\nabla u$ is parallel to the outer normal vector $n(x)$ for $\sigma$ almost every $x\in \partial \Omega$,  i.e.~it is of the form $\alpha(x) n(x)$, with some $\alpha: \partial \Omega \to \R$:

Indeed, when $\partial\Omega\subseteq \Rn$ is $C^1$ manifold and $w\in C^1\left ( \overline{\Omega} \right )$, then the statement is known by the classical theory. We assume that  $\Omega$ has the Lipschitz boundary and $w\in W_0^{1,1}(\Omega)\cap W^{2,1}(\Omega)$, therefore our assumptions are weaker. However, this fact is rather known to the specialists, for the reader's convenience we enclose the precise argument in Appendix~\ref{section Appendix}.
Thus, for $\sigma$-almost every $x\in\Omega$, it holds
$$
\alpha(x) = \langle \nabla w(x), n(x)\rangle =\lim_{t\to 0_+} \frac{w(x-tn(x))-w(x)}{-t}
= \lim_{t\to 0_+} \frac{w(x-tn(x))}{-t}\le 0,  
$$
and consequently
$$
n(x)^T \A(x)\nabla (\widetilde{H}(u)) = n(x)^T \A(x) \big ( \alpha(x) n(x)  \big ) = \alpha(x) \left ( n(x)^T \A(x) n(x) \right )  \le 0  \text{ $\sigma$-a.e.~on } \partial\Omega. 
$$
We deduce that $\Theta \leq 0$.

\ref{Theorem_Dirichlet_Sign_i} The estimate \eqref{a)} follows from the identity (denoting as $n_i(x)$ the $i$-th coordinate of the outer normal vector $n(x)$)

\begin{eqnarray*}
 \int_{\Omega \cap \{ u\in(0,B)\} } \operatorname{div}\A(x)\cdot  \nabla u H(u)\dee x &=& \int_{\Omega}\sum_{i,j\in \{ 1,\dots ,n\} } \frac{\partial a_{i,j}}{\partial x_j}\frac{\partial\widetilde{H}(u)}{\partial x_i}\dee x \\
  =\sum_{i,j\in \{ 1,\dots ,n\} } \int_{\partial\Omega} \frac{\partial a_{i,j}}{\partial x_j} n_i(x) \widetilde{H}(u)\dee \sigma  
&-& \sum_{i,j\in \{ 1,\dots ,n\} }\int_{\Omega} \frac{\partial^2 a_{i,j}}{\partial x_i\partial x_j } \widetilde{H}(u) \dee x \\
&\stackrel{\text{\ref{(u-TildeH)}} }{=}& -\int_\Omega \operatorname{div}^{(2)}\A(x) \widetilde{H}(u) \dee x \stackrel{\ref{(A2)}}{\ge} 0.
\end{eqnarray*}

Finally, the inequality \eqref{ineqality111} follows directly from \ref{Theorem_Dirichlet_Sign_ii}, \eqref{a)}, and \eqref{identity1}.
\end{proof}

\subsection{Analysis of the condition \ref{(u-TildeH)}}\label{u-tildeh}

The following statement contributes to the analyis of the condition \ref{(u-TildeH)}. It shows in paticular that if it holds, then the trace of $u$ is well presciribed   via \eqref{valueatpoint}, even though we are not assuming that $u\in W^{1,1}(\Omega)$ and such trace can be ``infinity''.

\begin{Theorem}\label{RemAn(u-TildeH-1)}
Let the condition  \ref{(u-TildeH)} holds. Then there is precisely one $T\in [0,B]$, allowing also $B=\infty$, such that $u\equiv T$ in $\partial\Omega$, where the trace of $u$ is prescribed  via \eqref{valueatpoint}.
\end{Theorem}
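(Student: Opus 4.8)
The plan is to transfer the statement from $u$ to the auxiliary function $w:=\widetilde{H}(u)$ and then exploit the convexity of $\widetilde{H}$. For $r>0$ and $x\in\partial\Omega$ write $A_r(f):=\frac{1}{|\Omega\cap B(x,r)|}\int_{\Omega\cap B(x,r)}f(y)\dee y$. By \ref{(u-TildeH)} we have $w\in W^{2,1}(\Omega)\subseteq W^{1,1}(\Omega)$; since $u(y)\in I$ a.e.\ and $\widetilde{H}\ge 0$ on $I$, also $w\ge 0$ a.e.\ in $\Omega$; and since the value of $w$ given by \eqref{valueatpoint} vanishes $\sigma$-a.e.\ on $\partial\Omega$ and $w\ge 0$, we get $A_r(w)\to 0$ as $r\to 0_+$ for $\sigma$-a.e.\ $x\in\partial\Omega$. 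Next I would study $\widetilde{H}$: it is continuous and non-negative on $I$ and strictly convex on $(0,B)$. First, $\inf_{(0,B)}\widetilde{H}=0$, for otherwise $w$ would be bounded below a.e.\ by a positive constant, contradicting the vanishing of $A_r(w)$. By strict convexity, the lower-semicontinuous extension of $\widetilde{H}$ to $[0,B]$ (with $B=\infty$ admitted as an endpoint) attains this infimum at a single point $T\in[0,B]$: either $T$ is the interior minimiser, or $\widetilde{H}$ is monotone on $(0,B)$ and $T$ is the endpoint towards which it decreases. Moreover, when $T$ is finite one has $T\in I$ and $\widetilde{H}(T)=0$: for $T\in(0,B)$ this is immediate, and for $T\in\{0,B\}$ it follows from Remark~\ref{rem-ii_abbrv}, since the limit $\lim_{s\to T}\widetilde{H}(s)=0$ is finite.

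The heart of the argument is a pointwise bound of $|u-T|$ by $w$ up to a small error. Suppose first that $T$ is finite and fix $\rho>0$ small enough that $T+\rho<B$ whenever $T<B$ and $T-\rho>0$ whenever $T>0$. On $\{|u-T|\le\rho\}$ trivially $|u-T|\le\rho$. On $\{u>T+\rho\}$, the monotonicity of secant slopes of the convex function $\widetilde{H}$ based at $(T,0)$ gives $\widetilde{H}(u)/(u-T)\ge\widetilde{H}(T+\rho)/\rho$, whence $|u-T|=u-T\le\frac{\rho}{\widetilde{H}(T+\rho)}\,w$, with $\widetilde{H}(T+\rho)>0$ by strict convexity. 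On $\{u<T-\rho\}$, using $0\le u$ we have $|u-T|\le T$, and since $\widetilde{H}$ is strictly decreasing on $(0,T]$ we have $w=\widetilde{H}(u)\ge\widetilde{H}(T-\rho)>0$, so $|u-T|\le\frac{T}{\widetilde{H}(T-\rho)}\,w$. Combining the three cases yields $|u(y)-T|\le\rho+C_\rho\,w(y)$ for a.e.\ $y\in\Omega$, with $C_\rho<\infty$. Hence $A_r(|u-T|)\le\rho+C_\rho A_r(w)$; letting $r\to 0_+$ and then $\rho\to 0_+$ gives $A_r(u)\to T$, i.e.\ $u\equiv T$ on $\partial\Omega$ in the sense of \eqref{valueatpoint}, for $\sigma$-a.e.\ $x$. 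The degenerate subcases $T=0\in I$ and $T=B\in I$ are handled by the same inequality with, respectively, the third or the second of the above pieces absent.

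It remains to treat $T=\infty$, which forces $B=\infty$ and $\widetilde{H}$ strictly decreasing on $(0,\infty)$ with $\lim_{s\to\infty}\widetilde{H}(s)=0$. For each $M>0$ we then have $\{u\le M\}=\{w\ge\widetilde{H}(M)\}$, so Chebyshev's inequality bounds the relative measure $\frac{|\{u\le M\}\cap B(x,r)|}{|\Omega\cap B(x,r)|}$ by $\widetilde{H}(M)^{-1}A_r(w)\to 0$; hence $\frac{|\{u>M\}\cap B(x,r)|}{|\Omega\cap B(x,r)|}\to 1$ and, since $u\ge 0$, $A_r(u)\ge M\cdot\frac{|\{u>M\}\cap B(x,r)|}{|\Omega\cap B(x,r)|}$, so $\liminf_{r\to0_+}A_r(u)\ge M$ for every $M$; thus the $\limsup$ in \eqref{valueatpoint} equals $+\infty=T$. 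Finally, uniqueness of $T$ is immediate: two constants equal to $u$ on $\partial\Omega$ $\sigma$-a.e., on a set of positive $\sigma$-measure, must coincide, and a finite constant cannot equal $+\infty$.

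The main obstacle is that $u$ need not be locally integrable near $\partial\Omega$ (by Example~\ref{zlozenie}, $u$ may even blow up there), so the contribution of $u$ on the set where $|u-T|$ is not small cannot be discarded by a naive $L^1$ estimate; this is precisely where the linear control $|u-T|\lesssim w$ away from $T$, furnished by the convexity of $\widetilde{H}$, is indispensable, and it is also what makes the case $T=\infty$ work. A secondary point needing care is the dichotomy describing where the strictly convex, non-negative $\widetilde{H}$ attains or approaches its infimum $0$, together with the verification, via Remark~\ref{rem-ii_abbrv}, that this point lies in $I$ whenever it is finite.
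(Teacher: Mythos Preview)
Your proof is correct but follows a different path from the paper's. Both approaches hinge on the strict convexity and non-negativity of $\widetilde{H}$ to single out a unique $T\in[0,B]$ at which $\widetilde{H}$ (extended lower-semicontinuously) vanishes. The divergence is in how one passes from $A_r(\widetilde{H}(u))\to 0$ to $A_r(u)\to T$. The paper does this in one stroke via Jensen's inequality: since $A_r(w)\to 0$ is in fact a limit (the integrand being non-negative), convexity gives $\widetilde{H}(A_r(u))\le A_r(\widetilde{H}(u))\to 0$, which forces $A_r(u)\to T$; the case $T=\infty$ is absorbed into the same argument by interpreting $\widetilde{H}(\infty)$ as the limit. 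You instead extract from convexity a pointwise linear bound $|u-T|\le \rho + C_\rho\,\widetilde{H}(u)$ valid a.e.\ (using monotonicity of secant slopes and the trivial bound $|u-T|\le T$ on the left tail), and treat $T=\infty$ separately via Chebyshev's inequality. Your route is slightly longer and requires the case split, but it yields a quantitative control of $A_r(|u-T|)$ in terms of $A_r(w)$, which the Jensen argument does not directly provide; it also makes transparent why the potential non-integrability of $u$ near $\partial\Omega$ is harmless. The paper's Jensen argument is shorter and treats all cases uniformly.
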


\begin{proof}
    We recall that $\widetilde{H}$ is non-negative, strictly convex, and continuous on $(0,B)$, whence it is monotone near the endpoints and so the limit
    \begin{equation} \label{RemAn(u-TildeH-1):E1}
        \lim_{\substack{t \to T \\ t \in (0, B)}} \widetilde{H}(t)
    \end{equation}
    exists for every $T \in [0, B]$ (recall that $B$ is allowed to be infinite).

    We first observe that there is necessarily some $T \in [0,B]$ such that the limit \eqref{RemAn(u-TildeH-1):E1} is zero. This follows from our assumption that $\widetilde{H}(u)\equiv 0$ on $\partial \Omega$, where the trace is given by \eqref{valueatpoint}, since we get for a.e.~$x \in \partial \Omega$
    \begin{equation*}
       0 =  \limsup_{r\to 0_+} \frac{1}{|\Omega\cap B(x,r)|}\int_{\Omega\cap B(x,r)} \widetilde{H}(u(y)) \dee y \geq \inf_{t \in I} \widetilde{H}(t) \geq 0,
    \end{equation*}
    where we recall that $u$ has values in $I$ and $(0,B) \subseteq I \subseteq [0,B]$. The existence of the desired $T$ now follows from the fact that the interval $[0,B]$ is compact (in the case $B = \infty$ we may interpret it as the one-point compactification); this of course means that this $T$ may be infinite.

    Next, we observe that there is only one $T \in [0,B]$ such that the limit \eqref{RemAn(u-TildeH-1):E1} is zero, which follows from the strict convexity and non-negativity of $\widetilde{H}$.

    Finally, we observe that we have for a.e.~$x \in \partial \Omega$ that
    \begin{equation*}
        \begin{split}
            0 &=  \limsup_{r\to 0_+} \frac{1}{|\Omega\cap B(x,r)|}\int_{\Omega\cap B(x,r)} \widetilde{H}(u(y)) \dee y \\
            &\geq \liminf_{r\to 0_+} \frac{1}{|\Omega\cap B(x,r)|}\int_{\Omega\cap B(x,r)} \widetilde{H}(u(y)) \dee y  \geq 0,
        \end{split}
    \end{equation*}
    so the limit superior is actually a limit. Hence we may apply the Jensen integral inequality to get
    \begin{equation} \label{RemAn(u-TildeH-1):E2}
        \begin{split}
            0 &=  \lim_{r\to 0_+} \frac{1}{|\Omega\cap B(x,r)|}\int_{\Omega\cap B(x,r)} \widetilde{H}(u(y)) \dee y \\
            &\geq \lim_{r\to 0_+} \widetilde{H} \left( \frac{1}{|\Omega\cap B(x,r)|}\int_{\Omega\cap B(x,r)} u(y) \dee y \right)  \geq 0,
        \end{split}
    \end{equation}
    where the integrals
    \begin{equation*}
        \frac{1}{|\Omega\cap B(x,r)|}\int_{\Omega\cap B(x,r)} u(y) \dee y
    \end{equation*}
    are always defined, finite or not, because $u$ is non-negative (if the integral is infinite for a given $r$ then we interpret $\widetilde{H}(\infty) = \lim_{t \to \infty} \widetilde{H}(t)$). It remains only to observe that this means that we also have
    \begin{equation*}
        \lim_{r\to 0_+} \frac{1}{|\Omega\cap B(x,r)|}\int_{\Omega\cap B(x,r)} u(y) \dee y = T,
    \end{equation*}
    where $T \in [0, B]$ is the unique point where the limit \eqref{RemAn(u-TildeH-1):E1} is zero, because otherwise we could construct a sequence of radii violating \eqref{RemAn(u-TildeH-1):E2}.
\end{proof}

\subsection{Opial-type inequalities}\label{opialtype}
Before we present another simplification of the inequality \eqref{ineqality1}, let us focus on certain variants of Opial-type inequalities, which seem interesting by themselves and, to the best of our knowledge, are not known. The analysis presented here is independent of Section~\ref{SectionBasicResults}. 

\smallskip
\noindent
We will now exploit a certain property of $h$: \ref{(h-H-TildeH)}, as introduced in Section~\ref{assumptions}.

\begin{Theorem}[Opial-type inequality for $\widetilde{H}(u)$ vanishing on the boundary]\label{Opial}~\\ Assume 
\ref{(Omega)}, \ref{(h)},  \ref{(I-H)},  
\ref{(h-H-TildeH)}, $u\in W^{1,1}_{\rm loc}(\Omega)$, $u(x)\in I$ a.e., and 
$\widetilde{H}(u)\in W_0^{1,1}(\Omega)$.
Then
\begin{equation}
\begin{aligned}
\int_{\Omega \cap \{ u\in (0,B)\}} \mathcal{G}_H(u) \dee x &=
\int_{\Omega \cap \{ u\in (0,B)\}} |\mathcal{T}_{H}(u)|^2 h(u)\dee x\\
&\le   C_PC_{\widetilde{H}} \int_{\Omega \cap \{ u\in (0,B)\}} \|\nabla u \| |\mathcal{T}_{H}(u)| h(u) \dee x, \label{opial1}\\
\end{aligned}
\end{equation}
\begin{equation}
\int_{\Omega \cap \{ u\in (0,B)\}} \|\nabla u\| |\mathcal{T}_{H}(u)| h(u)\dee x \le  (C_PC_{\widetilde{H}})^2 
\int_{\Omega \cap \{ u\in (0,B)\}} \|\nabla u\|^2 h(u)\dee x,\label{opial2}
\end{equation} 
where $C_P$ is the constant in the Poincar\'e inequality \eqref{poincare} and $C_{\widetilde{H}}$ is as in \ref{(h-H-TildeH)}. \\
Moreover, the integrals involved in \eqref{opial1} converge.
\end{Theorem}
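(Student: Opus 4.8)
The plan is to deduce both inequalities from the Poincaré inequality \eqref{poincare} applied to the function $w := \widetilde{H}(u)$, which belongs to $W_0^{1,1}(\Omega)$ by assumption, combined with the pointwise bound \ref{(h-H-TildeH)} (equivalently \eqref{punktmulti}). First I would record the pointwise chain-rule identity for $\nabla w$: by Lemma~\ref{lcomp} and the ACL characterisation (Theorem~\ref{nikos}), for a.e.~$x \in \Omega$ one has $\nabla w = H(u)\nabla u\,\chi_{u\in(0,B)}$, so that $\|\nabla w\| = |H(u)|\,\|\nabla u\|\,\chi_{u\in(0,B)} = |\mathcal{T}_H(u)|\,h(u)\,\|\nabla u\|\,\chi_{u\in(0,B)}$, using the representation $H = \mathcal{T}_H h$ from \eqref{secondtransform1}. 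Likewise, on the set $\{u\in(0,B)\}$ we have $|w| = |\widetilde{H}(u)|$, and \ref{(h-H-TildeH)} gives $\mathcal{G}_H(u) = H^2(u)/h(u) = |\mathcal{T}_H(u)|^2 h(u) \le C_{\widetilde{H}}\,|\widetilde{H}(u)| = C_{\widetilde{H}}\,|w|$ pointwise.

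For \eqref{opial1}, I would then simply chain these facts: integrating the pointwise bound over $\Omega\cap\{u\in(0,B)\}$ and noting $w \equiv 0$ a.e.~on $\{u\in\{0,B\}\}$ (since those sets lie in level sets of $w$, as already used in the proof of Theorem~\ref{LemmaBoundedUcoinfty}),
\begin{equation*}
\int_{\Omega\cap\{u\in(0,B)\}} \mathcal{G}_H(u)\dee x \le C_{\widetilde{H}}\int_{\Omega\cap\{u\in(0,B)\}} |w|\dee x = C_{\widetilde{H}}\int_\Omega |w|\dee x \le C_P C_{\widetilde{H}}\int_\Omega \|\nabla w\|\dee x,
\end{equation*}
and the last integral equals $\int_{\Omega\cap\{u\in(0,B)\}} |\mathcal{T}_H(u)|\,h(u)\,\|\nabla u\|\dee x$ by the chain rule above. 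This yields \eqref{opial1}. The convergence claim follows because the right-hand side is finite: by Cauchy--Schwarz with respect to $h(u)\dee x$ it is bounded by $\big(\int \mathcal{G}_H(u)\big)^{1/2}\big(\int \|\nabla u\|^2 h(u)\big)^{1/2}$ — but to avoid circularity one first notes $\int_\Omega\|\nabla w\|\dee x<\infty$ trivially since $w\in W_0^{1,1}(\Omega)$, hence the middle integral in \eqref{opial1} is finite, hence by \eqref{opial1} so is $\int\mathcal{G}_H(u)$.

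For \eqref{opial2}, the idea is to bound the mixed integral $\int \|\nabla u\|\,|\mathcal{T}_H(u)|\,h(u)\dee x$ by Cauchy--Schwarz applied to the measure $\mu = h(u)\dee x$, splitting the integrand as $\|\nabla u\|\cdot(|\mathcal{T}_H(u)|)$:
\begin{equation*}
\int_{\Omega\cap\{u\in(0,B)\}} \|\nabla u\|\,|\mathcal{T}_H(u)|\,h(u)\dee x \le \left(\int_{\Omega\cap\{u\in(0,B)\}}\|\nabla u\|^2 h(u)\dee x\right)^{1/2}\left(\int_{\Omega\cap\{u\in(0,B)\}} |\mathcal{T}_H(u)|^2 h(u)\dee x\right)^{1/2}.
\end{equation*}
The second factor is exactly $\big(\int\mathcal{G}_H(u)\big)^{1/2}$, which by \eqref{opial1} is at most $(C_P C_{\widetilde{H}})^{1/2}\big(\int\|\nabla u\|\,|\mathcal{T}_H(u)|\,h(u)\big)^{1/2}$. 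Denoting $X := \int\|\nabla u\|\,|\mathcal{T}_H(u)|\,h(u)\dee x$ and $Y := \int\|\nabla u\|^2 h(u)\dee x$, this reads $X \le Y^{1/2}\,(C_P C_{\widetilde{H}})^{1/2}\,X^{1/2}$; since $X$ is finite (shown above), we may divide by $X^{1/2}$ to obtain $X^{1/2}\le (C_P C_{\widetilde{H}})^{1/2} Y^{1/2}$, i.e.~$X \le C_P C_{\widetilde{H}}\, Y$. This is slightly sharper than the stated constant $(C_P C_{\widetilde{H}})^2$, so the claimed bound certainly follows (one may prefer to iterate once more, feeding \eqref{opial2} back into \eqref{opial1}, to match the stated form; either way the argument is the same).

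The only genuine subtlety — and the step I would be most careful about — is the bookkeeping on the set where $u$ takes the endpoint values $0$ or $B$: one must check that $\nabla w$ vanishes a.e.~there and that restricting all integrals to $\{u\in(0,B)\}$ is consistent, exactly as in the proof of Theorem~\ref{LemmaBoundedUcoinfty}; and one must handle the finiteness of $X$ cleanly (via $w\in W_0^{1,1}$) before dividing, to avoid a vacuous $\infty \le \infty$. Everything else is a direct concatenation of the chain rule, the pointwise inequality \ref{(h-H-TildeH)}, the Poincaré inequality, and Cauchy--Schwarz.
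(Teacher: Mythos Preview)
Your approach is essentially identical to the paper's: the pointwise bound \ref{(h-H-TildeH)}, then Poincar\'e applied to $w=\widetilde{H}(u)\in W_0^{1,1}(\Omega)$, then the chain rule to rewrite $\|\nabla w\|$, and finally Cauchy--Schwarz plus a self-improving rearrangement for \eqref{opial2}. Your observation that the argument actually delivers the sharper constant $C_PC_{\widetilde{H}}$ (rather than its square) in \eqref{opial2} is correct.

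One small slip: you write ``$w\equiv 0$ a.e.\ on $\{u\in\{0,B\}\}$ (since those sets lie in level sets of $w$)'', but being a level set of $w$ only forces $\nabla w=0$ there, not $w=0$; indeed $\widetilde{H}(0)$ or $\widetilde{H}(B)$ need not vanish. The paper handles this step simply by the trivial inequality
\[
\int_{\Omega\cap\{u\in(0,B)\}}|\widetilde{H}(u)|\dee x \le \int_\Omega |\widetilde{H}(u)|\dee x,
\]
which is all that is needed; your equality should be weakened to this inequality and the argument goes through unchanged.
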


\begin{proof}
In the case when $I=(0,B)$, we have by \ref{(h-H-TildeH)} that
$(\mathcal{T}_{H}(u))^2 h(u) = \frac{H^2(u)}{h(u)}\le C_{\widetilde{H}}|\widetilde{H}(u)|$. Moreover, we assume $\widetilde{H}(u) \in W_0^{1,1}(\Omega)$. 
Therefore, we can apply the Poincar\'e inequality \eqref{poincare} to get
\begin{eqnarray*}
\int_{\Omega } |\mathcal{T}_{H} (u) |^2 h(u)\dee x &=&
\int_{\Omega} \frac{H^2(u)}{h(u)}\dee x \le C_{\widetilde{H}}\int_{\Omega} |\widetilde{H}(u)|\dee x 
\stackrel{\eqref{poincare}}{\le} C_{\widetilde{H}}C_P
\int_{\Omega}\|\nabla\widetilde{H}(u)\|\dee x \\
&=&C_{\widetilde{H}} C_P\int_{\Omega} \|\nabla u\||H(u)|\dee x ,
\end{eqnarray*}
which yields \eqref{opial1} for this case.
We have used the ACL property (see Theorem~\ref{nikos} and Lemma~\ref{lcomp}), ensuring that the chain rule $\nabla{\widetilde{H}(u)} = H(u)\nabla u$ holds for almost every $x\in\Omega$. Note that $\widetilde{H}$ is Lipschitz on every segment $[\alpha,\beta]$ where $0<\alpha<\beta< B$.

For larger interval  $I$, we modify the above inequalities to 
\begin{equation*} \label{opial:E1}
\begin{split} 
\int_{\Omega\cap \{ u\in (0,B)\} } |\mathcal{T}_{H} (u) |^2 h(u)\dee x &= \int_{\Omega \cap \{ u\in (0,B)\}} \frac{H^2(u)}{h(u)}\dee x \le C_{\widetilde{H}}\int_{\Omega \cap \{ u\in (0,B)\}} |\widetilde{H}(u)|\dee x \\
&\le C_{\widetilde{H}}\int_{\Omega} |\widetilde{H}(u)|\dee x 
\stackrel{\eqref{poincare}}{\le}  C_{\widetilde{H}}C_P \int_{\Omega}\|\nabla\widetilde{H}(u)\|\dee x \\ 
&=  C_{\widetilde{H}} C_P\int_{\Omega \cap \{ u\in (0,B)\} } \|\nabla u\| |H(u)|\dee x .
\end{split}
\end{equation*}
Last equality holds, because when $T\in I\setminus (0,B)$, then for almost every point of the level set $\Omega_T:=\{ x\in\Omega : u(x)=T \}= \{ x\in\Omega : \widetilde{H}(u(x))=\widetilde{H}(T) \}$ we have $\nabla \widetilde{H}(u)=0$. We have thus proved \eqref{opial1} for any proposed $I$. Note that all terms in \eqref{opial1} are finite, because $\lVert \nabla u \rVert |H(u)|= \lVert \nabla\widetilde{H}(u) \rVert \in L^1(\Omega)$.

As for \eqref{opial2}, we  use the Cauchy--Schwarz inequality to obtain
$$
\begin{aligned}
&\int_{\Omega\cap \{ u\in (0,B)\}} \|\nabla u\||\mathcal{T}_{H}(u)| h(u) \dee x \\
&\le \left( \int_{\Omega \cap \{ u\in (0,B)\} } \|\nabla u\|^2 h(u) \dee x    \right)^{\frac{1}{2}}  \left( \int_{\Omega \cap \{ u\in (0,B)\}} |\mathcal{T}_{H}(u)|^2 h(u)\dee x    \right)^{\frac{1}{2}} =: \mathcal{A}^{\frac{1}{2}}\mathcal{B}^{\frac{1}{2}}.
\end{aligned}
$$

This together with \eqref{opial1}  gives
$\mathcal{B}\le C_{\widetilde{H}}C_P \mathcal{A}^{\frac{1}{2}}\mathcal{B}^{\frac{1}{2}}$, where $\mathcal{B}$ is finite by the arguments above.
Moreover, $\mathcal{A}$ can be assumed to be finite without loss of generality, as otherwise \eqref{opial2} holds trivially. Now, it suffices to rearrange this inequality.
\end{proof}

\subsection{Simplification based on Opial-type inequalities for
$\widetilde{H}(u)$ vanishing on the boundary}
Using the Opial-type inequalities from Theorem~\ref{Opial}, we obtain the following statement, which, under some extra assumptions, allows us to remove the term $\int_{\Omega\cap \{u\in (0, B)\} }\mathcal{G}_{H}(u)\dee x$ in the inequality \eqref{ineqality1}.
In particular, we assume that $\widetilde{H}(u)\equiv 0$ on the boundary of the domain.

\begin{Theorem}[inequalities for $\widetilde{H}(u)$ vanishing on the boundary]\label{simplification2}
 Assume that \ref{(G)}, \ref{(I-H)}, and \ref{(u-I-H)} hold, $\widetilde{H}$ is the given antiderivative  of $H$ which satisfies  \ref{(h-H-TildeH)}, and   $\widetilde{H}(u)\equiv 0$ on $\partial\Omega$. Define
 \begin{equation*}
 \Gamma  := c_{\A}^{-1}C_P^3C_{\widetilde{H}}^3, 
 \end{equation*}
 where $C_P$ is as in \eqref{poincare}, $C_{\widetilde{H}}$ is as in \ref{(h-H-TildeH)}, $c_{\A}$ is as in \ref{(A1)}. Then the following statements hold:

\begin{enumerate}
\item We have
\begin{eqnarray}\label{gh-1}
\int_{\Omega\cap \{u\in (0,B)\}  }
 \mathcal{G}_{H}(u)  \dee x &\le & \Gamma
\int_{\Omega \cap \{u\in (0,B)\} } \|\nabla u\|_{\A}^{2} h(u)\dee x .
\end{eqnarray}

\item When 
$$\kappa := \| \operatorname{div} \A\|_{L^\infty (\Omega)}c_{\A}^{-1}C_P^2C_{\widetilde{H}}^2$$ satisfies $0 < \kappa < 1$ then we also have
\begin{align}
 \int_{\Omega \cap \{u\in (0,B)\}  } \|\nabla u\|_{\A}^{2} h(u)\dee x  
&\le   \frac{1}{1-\kappa}\left( \int_{\Omega \cap \{u\in (0,B)\} } |Pu||H(u)| \dee x +\Theta \right),  \label{ineqality2}\\
\text{ where }\ \Theta &:= \int_{\partial\Omega }  n(x)^T \A(x) \nabla \widetilde{H}(u)\dee \sigma  (x) .\nonumber
\end{align}
\end{enumerate}
Furthermore, all the integrals involved in \eqref{gh-1} and \eqref{ineqality2} converge.
\end{Theorem}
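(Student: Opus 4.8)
The plan is to derive \eqref{gh-1} by feeding the Opial-type inequalities of Theorem~\ref{Opial} into each other, and then to obtain \eqref{ineqality2} by inserting the resulting bound on $\int_{\Omega\cap\{u\in(0,B)\}}\mathcal{G}_H(u)\dee x$ into the second inequality of \eqref{ineqality1} and absorbing the $\mathcal{G}_H$-term on the left-hand side, exactly as in the Young's inequality trick in the proof of Theorem~\ref{LemmaBoundedU}. First I would check that the hypotheses of Theorem~\ref{Opial} are met: \ref{(Omega)}, \ref{(h)}, \ref{(I-H)} and \ref{(h-H-TildeH)} are all among the present assumptions, $u\in W^{2,1}_{\mathrm{loc}}(\Omega)\subseteq W^{1,1}_{\mathrm{loc}}(\Omega)$, $u(x)\in I$ a.e., and $\widetilde{H}(u)\in W^{2,1}(\Omega)$ together with $\widetilde{H}(u)\equiv0$ on $\partial\Omega$ gives $\widetilde{H}(u)\in W^{1,1}_0(\Omega)$ (here I would invoke the standard trace characterisation of $W^{1,1}_0$ on Lipschitz domains cited in Section~\ref{SectionPropSobolevFunc}). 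Hence \eqref{opial1} and \eqref{opial2} are available, and all integrals appearing converge by the last sentence of Theorem~\ref{Opial}.

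For part (i): chaining \eqref{opial1} and \eqref{opial2} gives
\begin{equation*}
\int_{\Omega\cap\{u\in(0,B)\}}\mathcal{G}_H(u)\dee x
\le C_PC_{\widetilde{H}}\int_{\Omega\cap\{u\in(0,B)\}}\|\nabla u\|\,|\mathcal{T}_H(u)|\,h(u)\dee x
\le (C_PC_{\widetilde{H}})^3\int_{\Omega\cap\{u\in(0,B)\}}\|\nabla u\|^2 h(u)\dee x,
\end{equation*}
and then the lower ellipticity bound $\|\nabla u\|^2\le c_{\A}^{-1}\|\nabla u\|_{\A}^2$ from \ref{(A1)} converts the Euclidean gradient norm into the $\A$-norm, producing the constant $\Gamma=c_{\A}^{-1}C_P^3C_{\widetilde{H}}^3$ and finishing \eqref{gh-1}.

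For part (ii): I would start from the ``otherwise'' inequality in \eqref{ineqality1},
\begin{equation*}
\int_{\Omega\cap\{u\in(0,B)\}}\|\nabla u\|_{\A}^2 h(u)\dee x
\le 2\int_{\Omega\cap\{u\in(0,B)\}}|Pu||H(u)|\dee x+2\Theta
+ d_{\A}\int_{\Omega\cap\{u\in(0,B)\}}\mathcal{G}_H(u)\dee x,
\end{equation*}
and bound the last term using \eqref{gh-1}; the coefficient in front of $\int\|\nabla u\|_{\A}^2 h(u)$ on the right becomes $d_{\A}\Gamma=\|\operatorname{div}\A\|_{L^\infty}^2 c_{\A}^{-2}C_P^3C_{\widetilde{H}}^3=\kappa^2 (C_PC_{\widetilde{H}})^{-1}\kappa\cdots$—so here I should be careful about exactly how $\kappa$ enters. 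A cleaner route, which I expect the authors take, is to go back to the estimate \eqref{a} in the proof of Theorem~\ref{LemmaBoundedU}: there the boundary-divergence term is bounded by $\|\operatorname{div}\A\|_{L^\infty}\int\|\nabla u\|\,|\mathcal{T}_H(u)|h(u)$, and applying \eqref{opial2} directly to this quantity (rather than Cauchy--Schwarz) yields
\begin{equation*}
\Bigl|\int_{\Omega\cap\{u\in(0,B)\}}\operatorname{div}\A\cdot\nabla u\,H(u)\dee x\Bigr|
\le \|\operatorname{div}\A\|_{L^\infty}(C_PC_{\widetilde{H}})^2\int_{\Omega\cap\{u\in(0,B)\}}\|\nabla u\|^2 h(u)\dee x
\le \kappa\int_{\Omega\cap\{u\in(0,B)\}}\|\nabla u\|_{\A}^2 h(u)\dee x,
\end{equation*}
using $\|\nabla u\|^2\le c_{\A}^{-1}\|\nabla u\|_{\A}^2$ once more. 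Plugging this into the identity \eqref{identity1} and moving the $\kappa$-multiple of the left-hand side over gives $(1-\kappa)\int\|\nabla u\|_{\A}^2 h(u)\le\int|Pu||H(u)|+\Theta$, which is \eqref{ineqality2} after dividing by $1-\kappa>0$; the finiteness of all integrals is inherited from Theorem~\ref{LemmaBoundedUcoinfty} and Theorem~\ref{Opial}.

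\textbf{Main obstacle.} The genuinely delicate point is the reduction $\widetilde{H}(u)\in W^{1,1}_0(\Omega)$ from the hypotheses $\widetilde{H}(u)\in W^{2,1}(\Omega)$ and $\widetilde{H}(u)\equiv0$ on $\partial\Omega$ (in the sense of \eqref{valueatpoint}), since Theorem~\ref{Opial} is stated with the $W_0^{1,1}$ assumption; on a Lipschitz domain this is exactly the statement that a $W^{1,1}$ function with vanishing trace lies in $W^{1,1}_0$, which is classical but should be cited. The second point requiring care is bookkeeping of constants so that the divergence term is controlled precisely by $\kappa\int\|\nabla u\|_{\A}^2 h(u)$ with $\kappa=\|\operatorname{div}\A\|_{L^\infty}c_{\A}^{-1}C_P^2C_{\widetilde{H}}^2$, and verifying that the case $\kappa\ge1$ is simply excluded by hypothesis so no absorption is attempted there. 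Everything else is a direct concatenation of already-established inequalities.
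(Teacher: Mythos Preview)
Your proposal is correct and follows essentially the same route as the paper: part~(i) is obtained by chaining \eqref{opial1} with \eqref{opial2} and then invoking the ellipticity bound $\|\nabla u\|^2\le c_{\A}^{-1}\|\nabla u\|_{\A}^2$, while for part~(ii) the paper does exactly what you call the ``cleaner route''---it bounds the divergence term by $\|\operatorname{div}\A\|_{L^\infty}\int\|\nabla u\||H(u)|$, applies \eqref{opial2} and then \ref{(A1)} to get $\kappa\mathcal{I}^2$, and rearranges the identity \eqref{identity1}. Your caution about verifying $\widetilde{H}(u)\in W_0^{1,1}(\Omega)$ is well placed; the paper leaves this implicit.
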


\bigskip
\noindent
\begin{proof} The  inequality \eqref{gh-1} follows from \eqref{opial1} and \eqref{opial2} in Theorem~\ref{Opial} and from \eqref{gh}.
Moreover, both terms involved there are finite, by Theorem~\ref{LemmaBoundedUcoinfty}.
We are left with the proof of  \eqref{ineqality2}. 
For this, let us introduce a modified version of the notation used in the proof of Theorem~\ref{LemmaBoundedU}:
$$\mathcal{I}:=\left(\int_{\Omega \cap \{ u\in(0,B)\}} \|\nabla u\|_{\A}^{2} h(u)\dee x\right)^{\frac{1}{2}} ,\ \ \ \mathcal{J}:= \int_{\Omega \cap \{ u\in(0,B)\} } |Pu||H(u)| \dee x,$$ and we know that $\mathcal{I}$ and $\mathcal{J}$ are finite.
We  have
\begin{equation*}
\begin{aligned}
&\left|\int_{\Omega \cap \{ u\in(0,B)\} } \operatorname{div} \A\cdot \nabla u \;  H (u) \dee x \right|\le 
\| \operatorname{div} \A\|_{L^\infty(\Omega)} \int_{\Omega \cap \{ u\in(0,B)\} } \| \nabla u \| \;  |H (u)| \dee x \\
&\stackrel{\eqref{opial2}}{\le} \| \operatorname{div} \A\|_{L^\infty(\Omega)} (C_PC_{\widetilde{H}})^2 \int_{\Omega \cap \{ u\in(0,B)\}} \|\nabla u\|^2 h(u)\dee x  \\
&
\stackrel{\ref{(A1)}}{\le} \| \operatorname{div} \A\|_{L^\infty(\Omega)} (C_PC_{\widetilde{H}})^2 c_{\A}^{-1} \int_{\Omega \cap \{ u\in(0,B)\}} \|\nabla u\|_{\A}^2 h(u)\dee x  =\kappa \mathcal{I}^2.
\end{aligned}
\end{equation*}
Then we use the identity \eqref{identity1}, to get
$$
\mathcal{I}^2\le \mathcal{J} + \kappa \mathcal{I}^2 + \Theta \text{ with }  \kappa <1,
$$
where $\mathcal{I},\mathcal{J},\Theta$ are finite. Let us recall that the finiteness of $\Theta$ follows from the assumption $\nabla\widetilde{H}(u) = \nabla u \;H(u)\in W^{1,1}(\Omega,\R^n)$ as in Theorem~\ref{LemmaBoundedU}.
This, after rearranging, gives the desired conclusion. 
\end{proof}

\section{Links with the literature in probability and potential theory}\label{probabilityka}
In this section, we discuss the links of our results with probability theory and potential theory, particularly with the analysis of generators of analytic semi-groups.

\subsection{Integral chain-rule-type upper bound}\label{chain-sect}

Let us first focus on the pointwise chain rule for the composition $u\mapsto \widetilde{H}(u)$, under the suitable regularity assumptions.

When we deal with the function $u$ of one variable, we have
\begin{equation}\label{one-var-chain}
(\widetilde{H}(u))'= \widetilde{H}'(u)u' = Q(\widetilde{H})u',\text{ where } \widetilde{H}\mapsto Q(\widetilde{H}):=\widetilde{H}' \text{ is a linear operator}.
\end{equation}
\\
For second-order differential operators we have:
\begin{align}
(\widetilde{H}(u))''&= (\widetilde{H}'(u)u')'= \widetilde{H}''(u)(u')^2
+ \widetilde{H}'(u)u''  &\text{for a function of one variable}, \nonumber \\
P(\widetilde{H}(u))&=  \widetilde{H}'(u)Pu + 
\widetilde{H}''(u)\|\nabla u\|_{\A}^2 &\text{in general, where $P$ is as in \eqref{pe}}.\label{chainrule}
\end{align}
There is no linear operator $T \colon \widetilde{H}\mapsto T(\widetilde{H})$ such that the right-hand side above would be of the form $T(\widetilde{H})u^{''}$ or $T(\widetilde{H})Pu$, respectively. Hence, for a second-order elliptic operator,  the chain rule like  $P\widetilde{H}(u)=T(\widetilde{H}) Pu$ does not hold in the pointwise sense.

On the other hand, it follows from our statements that,  under suitable assumptions (as in either Theorem~\ref{structureA} or Theorem~\ref{simplification2}), the non-linear term $\widetilde{H}''(u)\|\nabla u\|_{\A}^2$   in 
 \eqref{chainrule} is integrally dominated by the linear term $\widetilde{H}'(u)Pu$, where the linearity applies to $u$. Consequently,
 the following integral chain-rule-type  upper bound holds: 
\begin{equation}\label{chainrule-1}
\begin{split}
    \int_\Omega |P(\widetilde{H}(u))|\dee x \, &\stackrel{\mathmakebox[\widthof{=}]{\eqref{chainrule}}}{\le} \, \int_\Omega |\widetilde{H}'(u)Pu|\dee x + \int_\Omega |(\widetilde{H})''(u)|\|\nabla u\|_{\A}^2 \dee x\\
    &\stackrel{\mathmakebox[\widthof{=}]{\textup{Thms}\  \ref{structureA},\, \ref{simplification2}} }{\lesssim} \hspace{0.7cm} \int_\Omega  |\widetilde{H}'(u)Pu|\dee x \\
    &= \int_\Omega  |Q(\widetilde{H}) Pu|\dee x,\  \text{ where } Q(\widetilde{H})=  (\widetilde{H})'\ \text{as in \eqref{one-var-chain}}.
\end{split}
\end{equation}
For a similar conclusion in the one-variable setting, we refer to the inequalities from \cite{km} and \cite{akjp}, where $Pu=u^{''}$.

Deep results, concerning general variants of the pointwise chain rule formula \eqref{chainrule}, dealing with the infinitesimal generators of the diffusion processes in place of the elliptic operator $P$, can be found in \cite[Lemma~1, p.~179]{bakry-emery}. 

In the setting of the fractional $\frac{\alpha}{2}$-Laplacian, there exist some known pointwise analogues of \eqref{chainrule-1}; namely, the so-called pointwise C{\' o}rdoba--C{\' o}rdoba inequalities (see~\cite{CordobaCordoba03}):
$$
\Lambda^\alpha (\phi (f))(x) \le \phi^{'}(f)\Lambda^\alpha f(x),
$$
where $\phi\in C^1(\mathbb{R})$ is convex, $f$ belongs to the Schwartz space $\mathcal{S}(\mathbb{R}^n)$, and $\Lambda^\alpha = (-\Delta)^{\frac{\alpha}{2}}$ is the fractional $\frac{\alpha}{2}$-Laplacian. For an extension to the setting of compact Riemannian manifolds, we refer the reader to \cite{CordobaMartinez15}. The possibility of generalising the C{\' o}rdoba-C{\' o}rdoba inequalities to non-divergent non-local operators has been suggested in \cite[Section~2.1]{CaffarelliSire17}. Therefore, we propose the following problem about the extension of our results to non-local operators.

\begin{oq}[extension to non-local operators] {\rm It would be interesting  to obtain an (as general as possible) integral chain-rule-type upper bound similar to \eqref{chainrule-1} for non-local operators, such as the fractional Laplacian or stable-like operators, instead of the classical elliptic operator $P$ as defined in \eqref{pe}.}
\end{oq}

\subsection{The identities 
arising in  the analysis of generators of analytic semi-groups in  $L^p(\R^n)$}   \label{SectionMetafune}

When estimating  the angle of analyticity of semi-group generators in the $L^p$ setting, the following identity appeared (see \cite{metafone-spina} for this and other motivations, derivation, and related references):
\begin{equation}\label{metafonespinaeq}
\int_{\R^n}
u |u|^{p-2}\Delta u \dee x = - (p-1)
\int_{\R^n}
|u|^{p-2} \| \nabla u\|^2 \dee x .
\end{equation}
Here we also focus on the following identity obtained in \cite[Theorem 4.1]{metafone-spina}, which is more general: 
\begin{equation}\label{metafonespinaeq1}
\int_{\R^n}
g(u) |g(u) |^{p-2} \Delta u \dee x 
= - (p-1) \int_{\R^n}
\| \nabla u\|^2 g^{*} (u)| g(u) |^{p-2} \chi_{g(u)\neq 0} \dee x ,
\end{equation}
where  $g : \R \rightarrow \R$  is a uniformly Lipschitz and monotone function such
that $g(0) = 0$,  $g^*$ is such a monotone Borel function that $g^* = g'$ almost everywhere,
$u \in W^{2,p}(\R^n )$, $1 < p < \infty$. 

Omitting the precise arguments,
 if we assume that $g\in C^1(\R)$ is non-decreasing, put 
$h(s)=(p-1) g'(s)|g(s)|^{p-2}\chi_{g(s)\neq 0}$, assume for simplicity that $u$ is compactly supported in $\R^n$ and non-negative,  take $I:= [0,\infty)$, $H(s):= |g(s)|^{p-1}$ in \ref{(I-H)}, and finally use \eqref{identity1}, we retrieve a variant of \eqref{metafonespinaeq1} in a specific case.

\smallskip
\noindent
We consider the following open problems.

 \begin{oq}[extension of the results to $\Omega=\R^n$]
 \rm In \eqref{metafonespinaeq} and \eqref{metafonespinaeq1}, one deals with identities on $\R^n$, while we deal with the identities on its sufficiently regular bounded domains. 
 It would be of interest to obtain some variants of our identities with general operator $P$,  holding on $\Omega= \R^n$.
 \end{oq}

 \begin{oq}[extension without sign condition]\rm 
The equations \eqref{metafonespinaeq} and \eqref{metafonespinaeq1} do not require function 
$u$ to be non-negative (see also \cite[Lemma 2.1]{metafone-spina}), as we do assume. Therefore, we believe that an extension of our results to the case of not necessarily non-negative functions would be possible. The results in this direction in the case of the analysis on subsets of $\R$ can be found in \cite{akjp}.
\end{oq} 

\subsection{Douglas formulae, Sobolev--Bregman forms, and harmonic extensions in the potential theory}

\subsubsection*{Douglas formulae}

When working on the Plateau problem, J. Douglas
discovered the following identity  \cite{douglas}:
\begin{equation}\label{dougl-ineq}
\int_{B(0,1)} \|\nabla u\|^2 \dee x =\frac{1}{8\pi}
\int_0^{2\pi}\int_0^{2\pi}\frac{(g(\eta)-g(\xi))^2}{\sin^2 ((\xi-\eta)/2)}\dee \eta\dee \xi ,
\end{equation}
where $u: \R^2\supseteq B(0,1)\rightarrow \R$ is a harmonic function, $u\equiv g$ on $\partial\Omega$. 

Douglas formulae later found several extensions and applications to the theory of Markov processes and subordinated Dirichlet forms (see e.g. \cite{chen, fukushima2, jacob}),   analysis of elliptic operators generating analytic semi-groups \cite{pazy}, harmonic analysis and potential theory
(\cite{bogdan}, \cite[Theorem 3.1]{verbitsky}). Let us focus on some selected results.

\subsubsection*{Extensions dealing with harmonic functions on bounded domains} 
In the recent article \cite{bogdan} one finds the following formula (see \cite[Theorem~15]{bogdan}, together with the notation ${\cal{E}}_{{\Omega}}^p[u]$ and ${{\cal{H}}^p_{{\partial\Omega}}}[u]$ established at the beginning of \cite[Section~3]{bogdan}):
\begin{equation}\label{dougl-ext}
\begin{split}
I&:= p(p-1)\int_\Omega \|\nabla u\|^2 |u(x)|^{p-2} \dee x \\
&= \frac{p}{2} \int_{\partial \Omega}\int_{\partial \Omega} \left( u^{\langle p-1\rangle}(z)- u^{\langle p-1\rangle}(w)\right)\left( u(z)- u(w)\right)
\gamma_\Omega (z,w) \dee\sigma (z) \dee\sigma (w) \\
&\hspace{3cm}-p\int_\Omega \Delta u\,  u^{\langle p-1\rangle} \dee x
+\frac{p}{2}\int_\Omega\Delta u(x) P_{\Omega} [u^{\langle p-1\rangle}](x) \dee x,
\end{split}
\end{equation}
where: 
\begin{itemize}
\item $p\in [2,\infty)$, $\Omega\subseteq \R^n$ is a bounded domain of class $C^{1,1}$, $u\in C^2(\overline{\Omega})$ (non-negativity is not required), $\dee\sigma$ is the surface measure on the boundary of {$\Omega$}, and  $a^{\langle \kappa \rangle} = |a|^{\kappa}{\rm sgn}\, a$;
 \item $\gamma_\Omega (z,w)$ is the Feller kernel (see e.g.~\cite[Lemma~1]{Zhao}), i.e.~the inner normal derivative of the Poisson kernel on $\partial \Omega$ (it is known that $\gamma_D(z,w)\approx |z-w|^{-n}$);
\item 
$P_\Omega [v]=\int_{\partial \Omega}v(z)P_\Omega(x,z)\dee z$ is the Poisson integral of the boundary data $v$, i.e.~the harmonic extension of $v$ to the inside of $\Omega$. In particular, $P_\Omega [u^{\langle p-1\rangle}]$ is harmonic inside $\Omega$ and we have $P_\Omega [u^{\langle p-1\rangle}] \equiv u^{\langle p-1\rangle}$ on $\partial\Omega$. 
\end{itemize}
Let us now consider our identity \eqref{identity}, with $P=\Delta$ and $h(u) = p(p-1)u^{p-2}$, which requires slightly different assumptions on the involved function $u$ (namely, the assumptions of Theorem~\ref{LemmaBoundedUcoinfty}):
\begin{eqnarray} 
I = p(p-1) \int_{\Omega } \|\nabla u\|^{2} u(x)^{p-2}\dee x  &=& -p\int_{\Omega} \Delta u \; u^{p-1} \dee x  
     +\Theta, \text{ where} \label{identity-xx}\\
\Theta &=& \int_{\partial\Omega}  n(x)^T  \nabla  ( u^p)\dee \sigma (x). \label{identity-xx-Theta}
\end{eqnarray}
Combining \eqref{dougl-ext} and \eqref{identity-xx} (under the appropriate intersection of the respective assumptions), we obtain the following Douglas-type representation of $\Theta$:
\begin{equation}\label{formulae-douglis-theta}
\begin{split}
\Theta = \frac{p}{2} \int_{\partial \Omega}\int_{\partial \Omega} \left( u^{ p-1}(z)- u^{p-1}(w)\right) &\left( u(z)- u(w)\right)
\gamma_\Omega (z,w) \dee\sigma(z) \dee\sigma(w)\\
&+\frac{p}{2}\int_\Omega\Delta u(x)  P_{\Omega}[u^{p-1}] \dee x .
\end{split}
\end{equation}
Observe that the formula \eqref{identity-xx-Theta} for $\Theta$ 
involves $\nabla(u^p)= pu^{p-1}\nabla u$ on $\partial\Omega$, in the appropriate points of differentiability. In particular, it requires not only the knowledge of $u$ on $\partial \Omega$, but also of $\nabla u$. In contrast, the right-hand side of \eqref{formulae-douglis-theta} involves only values of $u$ on $\partial\Omega$ and the values of $\Delta u$ inside $\Omega$. Moreover, the first term on the right-hand side of \eqref{formulae-douglis-theta} is the so-called Sobolev--Bregman form related to $\partial\Omega$ (see e.g.~\cite{BogdanGrzywny23} or \cite{BogdanJakubowski22}); we find it rather surprising that this form has emerged from our computations in this manner.

We believe that the formula \eqref{formulae-douglis-theta} is interesting as of itself. Therefore, we address the following problem:

 \begin{oq}[Douglas type representation of $\Theta$]\rm
What are the required conditions (as weak as possible) on the involved function $u$ and the domain $\Omega$ in order for the formula \eqref{formulae-douglis-theta} to hold?
  \end{oq}

\appendix

\section{Appendix}\label{section Appendix}

For the reader's convenience, we include some more detailed arguments for some of our statements.

The following remark provides details for Remark \ref{rem-ii_abbrv}.

\begin{Remark}[about the condition \ref{(I-H)}]\label{rem-ii}\rm
$H$ is strictly increasing on $(0,B)$, because $H^{'}=h>0$, and thus $\widetilde{H}$ is strictly convex. In particular, $H$ can change sign at most once in $(0,B)$, whence $\widetilde{H}$ is strictly monotone near the endpoints of $(0,B)$ and the limits $\lim_{s\to 0_+}\widetilde{H}(s)$ and $\lim_{s\to B_-}\widetilde{H}(s)$ exist, finite or not. If either of the limits is finite, then $\widetilde{H}$ can be continuously extended to the corresponding endpoint by setting the value to be the respective limit. Furthermore, the information about the finiteness of the limits gives us additional information about the integrability of $H$ near the respective endpoints of $I$. 

Namely, if we put $A$ as either $0$ or $B$, we get
\begin{equation*}
    \widetilde{H}(s) - \lim_{\substack{t \to A \\ t \in (0,B)}} \widetilde{H}(t) = \lim_{\substack{t \to A \\ t \in (0,B)}}  \int_t^s H(\tau) \dee \tau = \int_A^s H(\tau) \dee \tau.
\end{equation*}
The last integral is always defined, finite or not, because $H$ is increasing, and so it does not change sign on some neighbourhood of $A$. The last equality holds by the Monotone Convergence Theorem. Hence, $H$ is integrable near $0$ or $B$ if and only if the respective limit $\lim_{s\to 0_+}\widetilde{H}(s)$ or $\lim_{s\to B_-}\widetilde{H}(s)$ is finite and $\widetilde{H}$ can be extended to this endpoint. 

Finally, if $H$ is integrable near either of the endpoints, then we may for example construct an antiderivative of $H$ for $s \in I$ as either the Hardy or conjugate-Hardy transform of $H$, i.e.
\begin{align*}
    \widetilde{H}(s)&:= \int_0^s H(\tau) \dee \tau,
    &\widetilde{H}(s):= -\int_s^B H(\tau) \dee \tau.
\end{align*}
depending on whether $H$ is integrable near $0$ or near $B$.

On the other hand, we never require that the functions $H$ and $h$ can be extended to $I$.
\end{Remark}

Next, we present the proof of the following lemma that was required for the proof of Theorem~\ref{structureA}. We note, that while the result is classical for more regular domains, the case of Lipschitz domains is apparently not as standard. Hence, we provide the proof for the sake of completeness.

\begin{Lemma}\label{lem-app1}
If $\Omega\in C^{0,1}$ and  $v\in W_0^{1,1}(\Omega)\cap W^{2,1}(\Omega)$, then for $\sigma$ almost every $y\in\partial\Omega$ the vector $\nabla v$ as in \eqref{valueatpoint}
 is parallel to the normal vector to $\partial \Omega$.  
\end{Lemma}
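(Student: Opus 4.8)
The statement is local in nature, so the plan is to reduce to a flat boundary by the bi-Lipschitz change of variables furnished by the definition of a Lipschitz domain, prove the claim there, and transport it back. First I would fix a boundary point $y_0 \in \partial\Omega$ near which, after rotation and translation, $\Omega$ coincides with the subgraph $\{x_n < \psi(x')\}$ for some Lipschitz function $\psi \colon \R^{n-1} \to \R$, and let $\Phi(x', x_n) = (x', x_n - \psi(x'))$ be the flattening map, which is bi-Lipschitz with Lipschitz inverse. The key point is that $w := v \circ \Phi^{-1}$ still belongs to $W^{1,1}$ (indeed to $W^{1,1}_0$ of the half-space side) near the flattened patch, since composition with a bi-Lipschitz map preserves $W^{1,1}_{\rm loc}$ and the zero boundary trace; the second-order regularity $W^{2,1}$ is \emph{not} preserved by a merely Lipschitz change of variables, but I only need it on the original side and it is not needed after flattening.

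\textbf{The flat case.} On the half-space $\{x_n < 0\}$, with $w \in W^{1,1}$ vanishing (in the trace sense) on $\{x_n = 0\}$, the claim reduces to: the tangential derivatives $\partial_{x_1} w, \dots, \partial_{x_{n-1}} w$ vanish $\sigma$-a.e.\ on $\{x_n = 0\}$, so that $\nabla w$ at the boundary, computed via the averaging formula \eqref{valueatpoint}, is a multiple of $e_n$. This is a standard fact: the trace of $w$ on $\{x_n = 0\}$ is the $W^{1,1}(\R^{n-1})$-function $x' \mapsto w(x',0)$, which is identically zero by hypothesis, hence its distributional tangential gradient is zero; on the other hand, by the trace characterisation (or directly by Fubini together with the Nikodym ACL property, Theorem~\ref{nikos}, applied on lines parallel to the $x_i$-axes for $i<n$), the tangential components of the pointwise representative $\nabla w$ given by \eqref{valueatpoint} agree $\sigma$-a.e.\ on $\{x_n=0\}$ with the gradient of the trace. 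Thus $\nabla w = (\partial_{x_n} w)\, e_n$ $\sigma$-a.e.\ on the flat patch.

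\textbf{Transporting back.} The chain rule for the bi-Lipschitz map $\Phi$ gives, for a.e.\ point, $\nabla v(x) = D\Phi(x)^T (\nabla w)(\Phi(x))$, where $D\Phi(x)^T$ maps $e_n$ to the vector $(-\nabla_{x'}\psi(x'), 1)$; one checks that for $x$ on the graph, this vector is (a nonzero multiple of) the a.e.-defined outer normal $n(x)$ to $\partial\Omega$, by the very definition of the normal to a Lipschitz graph. Hence $\nabla v$ is parallel to $n$ $\sigma$-a.e.\ on this boundary patch. Since $\partial\Omega$ is covered by finitely many such patches, the conclusion follows globally. The main obstacle I anticipate is the bookkeeping around the pointwise representative \eqref{valueatpoint}: one must make sure that the value of $\nabla v$ produced by averaging on the original domain corresponds, $\sigma$-a.e., to the value produced by averaging for $w$ on the flattened domain (both coincide $\sigma$-a.e.\ with the respective $W^{1,1}$-traces, as recalled after \eqref{valueatpoint}), and that the chain rule $\nabla(v\circ\Phi^{-1}) = (D\Phi^{-1})^T (\nabla v)\circ\Phi^{-1}$ holds in the correct a.e.\ sense for Sobolev functions under bi-Lipschitz maps — this is where one invokes that $\Phi^{-1}$ is Lipschitz and that $w\in W^{1,1}$ so that the trace is well-defined and the ACL representative is available.
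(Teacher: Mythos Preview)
Your plan has a real gap, and you yourself put your finger on the sore spot without realising it. You correctly note that the bi-Lipschitz flattening $\Phi$ destroys $W^{2,1}$ regularity, so $w=v\circ\Phi^{-1}$ is only in $W^{1,1}$. But then in the flat case you assert that ``the tangential components of the pointwise representative $\nabla w$ given by \eqref{valueatpoint} agree $\sigma$-a.e.\ on $\{x_n=0\}$ with the gradient of the trace''. That identification is exactly a $W^{2,1}$-level statement: for a function that is merely $W^{1,1}$, the trace on $\{x_n=0\}$ lies only in $L^1(\R^{n-1})$, not in $W^{1,1}(\R^{n-1})$, and $\nabla w\in L^1$ has no well-defined trace on the hyperplane at all (the averaging formula \eqref{valueatpoint} gives a pointwise limsup, but it carries no $\sigma$-a.e.\ meaning for an $L^1$ function on a null set). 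So neither side of the equality you want is defined, and the ACL argument on tangential lines does not help: it gives absolute continuity of $w$ on a.e.\ line $\{x_n=c\}$, not on the single line $\{x_n=0\}$. Your final paragraph says ``both coincide $\sigma$-a.e.\ with the respective $W^{1,1}$-traces'', but $\nabla w$ simply does not have one.

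The paper avoids this by \emph{not} flattening the domain. It parametrises the graph piece $M$ of $\partial\Omega$ by $\Psi(y')=(y',\Phi(y'))$ and works with $v\circ\Psi$ on $(0,1)^{n-1}$. The $W^{2,1}$ regularity of $v$ is used twice, and both uses are essential: first to give $\nabla v$ a genuine $L^1$-trace on $M$, and second to show that the trace of $v$ on $M$ is actually in $W^{1,1}(M)$ with the chain rule $\partial_{y_i}(v\circ\Psi)=\nabla v(\Psi)\cdot\partial_{y_i}\Psi$ holding in $L^1((0,1)^{n-1})$ (this is obtained by approximating $v$ in $W^{2,1}(\Omega)$ by $C^\infty(\overline{\Omega})$ functions, for which the chain rule with the Lipschitz $\Psi$ is elementary, and passing to the limit using the trace theorem). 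Since $v\in W_0^{1,1}(\Omega)$, $v\circ\Psi\equiv 0$, hence $\nabla v(\Psi(y'))\cdot\partial_{y_i}\Psi(y')=0$ for each tangential direction, which is the desired orthogonality. If you try to repair your argument by pushing the traces of $\partial_{x_j}v$ through $\Phi$ and reassembling $\partial_{y_i}w$ on the boundary, you end up doing precisely this computation; the flattening is a detour that only obscures where the second-order regularity is spent.
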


\begin{proof}
{\sc{Step 1}} Reduction argument and some geometric objects.\\
{\sc Reduction argument.}
Using the localisation argument and rigid motions (see e.g. \cite[Section~6]{KufnerJohn77}), we can assume that $\Omega$
is a subgraph of some Lipschitz function $ \Phi : (0,1)^{n-1}\rightarrow \R $:
\begin{eqnarray*}
\Omega = \left \{ (x^{'},x_n): x^{'}\in (0,1)^{n-1}, x_n\in (0,\Phi (x^{'})) \right\},
\end{eqnarray*}
and prove that $\nabla v$ is $\sigma$-a.e.~parallel to the normal vector to the ``graph part'' of the boundary, which is the Lipschitz $(n-1)$-dimensional submanifold  
$$
M:= \{ (x^{'},x_n): x^{'}\in (0,1)^{n-1}, x_n = \Phi (x^{'})\} \subseteq   \R^n.
$$

\noindent
{\sc Geometric objects.}
For given $$y= (y^{'},y_n)=  \Psi(y^{'}):= (y^{'},\Phi(y^{'}))\in M,$$  tangent  space to $M$ at $y=(y^{'},y_n)$ is 
\begin{equation*}\label{tan}
T_yM:= \{ (x^{'}, \nabla \Phi (y^{'}) \cdot x^{'}) : x^{'}\in \R^{n-1}  \}={\rm span}\left \{ w_i:= \left (e_i,\frac{\partial\Phi (y^{'})}{\partial y_i} \right ),\ i=1,\dots, n-1  \right \} . 
\end{equation*}
As $\Phi$ is only Lipschitz, by the Rademacher Theorem (see \cite[Theorem~9.14]{Leoni17}), tangent space is defined for $y$  in the set $\mathcal{B}\subseteq M$ of
full Hausdorff measure $\sigma$, 
but perhaps not everywhere. 
Note that
$$\mathcal{B}=\{ 
y= \Psi(y^{'}):  \Phi\ \hbox{\rm is differentiable at}\ y^{'}  
\}=: \Psi (\mathcal{C}),\ {\rm where}\ \sigma ((0,1)^{n-1} \setminus \mathcal{C})=0. $$

\noindent
One defines Lebesgue and Sobolev spaces, see e.g. \cite[Definition 6.7.2]{KufnerJohn77},
\begin{eqnarray*}
L^p(M) &=& \{ f: M\rightarrow \R: f\circ \Psi \in L^p((0,1)^{n-1})\} ,\\
W^{1,1}(M) &=& \{ f: M\rightarrow \R: f\circ \Psi \in W^{1,1}((0,1)^{n-1})\} . 
\end{eqnarray*}
Trace Theorem (see e.g.~\cite[Theorem~6.7.8]{KufnerJohn77}, where (A.1), (A.2) are a special cases) 
implies that, when $u\in W^{2,1}(\Omega)$ and $\Omega\in C^{0,1}$, then  
\begin{eqnarray}\label{trace-w2}
\nabla u\in L^1(M,\R^n) && {\rm and}\  \| \nabla u\|_{L^1(M,\rn)}\lesssim \|\nabla  u\|_{W^{1,1}(\Omega)} \lesssim \|  u\|_{W^{2,1}(\Omega)},\\
 u\in W^{1,1}(M) && {\rm and}\  \|  u\|_{W^{1,1}(M)} \lesssim \|  u\|_{W^{2,1}(\Omega)}.\label{trace-w2-1}
\end{eqnarray}
Although \eqref{trace-w2} and \eqref{trace-w2-1} are known, \eqref{trace-w2-1} seems not so obvious at first glance in the setting of Lipschitz domains. For reader's convenience we explain them by direct arguments. Indeed, \eqref{trace-w2} is the simplest classical Trace Theorem in the setting of $W^{1,1}(\Omega)$, which holds wherever $\Omega$ is Lipschitz (see e.g.~\cite[Theorem~6.4.1]{KufnerJohn77}; we provide further information in Section~\ref{SectionPropSobolevFunc}). To show \eqref{trace-w2-1} it suffices to note that  chain rule 
\begin{eqnarray}\label{chainrule-ap}
\frac{\partial}{\partial x_i}(u\circ \Psi)(x)=\nabla u(\Psi(x))\cdot \frac{\partial\Psi}{\partial x_i}(x)\ \ {\rm in}\ \  D^{'}((0,1)^{n-1})
\end{eqnarray}
holds. This is obvious when $u\in C^1\left ( \overline{\Omega} \right)$. For $u\in W^{2,1}(\Omega)$ it follows from the $C^1\left ( \overline{\Omega} \right )$-case by an approximation argument, since even the set $C^\infty \left ( \overline{\Omega} \right )$ is dense in $W^{2,1}(\Omega)$ for $\Omega \in C^{0,1}$ (see e.g.~\cite[Therorem~11.35]{Leoni17}).
In particular $\nabla u(\Psi(x))\in L^1((0,1)^{n-1})$ and $\frac{\partial\Psi}{\partial x_i}(x)\in L^{\infty}((0,1)^{n-1})$. 
Therefore the right-hand side in \eqref{chainrule-ap} belongs to $L^1((0,1)^{n-1})$ and so the property (A.2) holds.

\noindent
{\sc Step 2.} We complete the proof.\\
As mentioned above, $C^\infty \left ( \overline{\Omega} \right)$ is dense in $W^{2,1}(\Omega)$, so let us consider the sequence $\{ v_k\}\subseteq C^\infty \left ( \overline{\Omega} \right)$ such that $v_k\to v$ in $W^{2,1}(\Omega)$ as $k\to\infty$. We do not have guarantee that $v_k\in W_0^{1,1}(\Omega)$. 
However,  we can deduce, using also 
  \eqref{trace-w2} that
\begin{eqnarray*}
\nabla v_k\stackrel{k\to\infty}{\rightarrow } \nabla v \ {\rm in}\ W^{1,1}(\Omega ,\rn)\ \ \ {\rm and}\ \ \ 
\nabla v_k \stackrel{k\to\infty}{\rightarrow} \nabla v\ {\rm in}\ L^1(M, \rn) . \label{sladnabrzegu}
\end{eqnarray*}
We can additionally assume, after eventually passing to the subsequence, that 
\begin{equation*}\label{setc}
\nabla v_k (y) \stackrel{k\to\infty}{\rightarrow}\nabla v (y) \ \ \hbox{for every} \ y\in\mathcal{D}\subseteq M, \ {\rm where}\ \sigma (M\setminus \mathcal{D})=0. 
\end{equation*} 
We note that $\Psi^{-1}(\mathcal{D})$ is also of full measure in $(0,1)^{n-1}$. This follows from the area formula, since $\Psi$ is Lipschitz and regular a.e., meaning that its Jacobian is positive a.e.~on $(0,1)^{n-1}$. Indeed,
\begin{equation*}
    J(D\Psi) = \sqrt{\det( D\Psi^T D\Psi))} = \sqrt{\det (\operatorname{Id}_{n-1} + \nabla \Phi \otimes \nabla\Phi)} = \sqrt{1 + \Vert \nabla \Phi \rVert^2 } > 0. 
\end{equation*} 

\noindent
At the same time, by \eqref{trace-w2-1}, $v_k\in W^{1,1}(M)$ and 
$$v_k\stackrel{k\to\infty}{\rightarrow} v\ {\rm  in}\  W^{1,1}(M).$$
Equivalently,
$$
v_k\circ \Psi \stackrel{k\to\infty}{\rightarrow} v\circ \Psi\ {\rm in}\ W^{1,1}((0,1)^{n-1}).
$$
However, $v\in W_0^{1,1}(\Omega)$, so we have $v\circ \Psi \equiv 0$ on $(0,1)^{n-1}$, and consequently
$$
v_k\circ \Psi \stackrel{k\to\infty}{\rightarrow} 0 \ {\rm in}\ W^{1,1}((0,1)^{n-1}). 
$$
After eventually passing to an appropriate subsequence, we can assume that there is a set of full measure $\mathcal{F}\subseteq (0,1)^{n-1},$ such that $\sigma ((0,1)^{n-1}\setminus \mathcal{F})=0$ and it holds
\begin{equation}\label{convtozero}
(v_k\circ \Psi) (y^{'}) \stackrel{k\to\infty}{\rightarrow} 0\ {\rm and}\ \nabla (v_k\circ \Psi)(y^{'}) \stackrel{k\to\infty}{\rightarrow} 0\ \hbox{\rm for every}\ y^{'}\in \mathcal{F}.
\end{equation}
To simplify the presentation, we will write the gradient as a row vector for the rest of the proof. Then, we have for every $y^{'}\in \mathcal{C}\cap \mathcal{F}\cap \Psi^{-1}(\mathcal{D})$ (which is of full measure in $(0,1)^{n-1}$)
\begin{eqnarray*}
\nabla (v_k\circ \Psi )(y^{'}) \stackrel{(a)}{=} [(\nabla v_k) (\Psi (y^{'}))] D \Psi (y^{'})
 \stackrel{(b)}{\rightarrow} [(\nabla v)(\Psi (y^{'}))] D \Psi (y^{'})\in \mathbf{R}^{n-1},
\end{eqnarray*}
where (a) holds at every point $y^{'}\in \mathcal{C}$ by the chain rule involving smooth $v_k$ and Lipschitz $\Psi$, while (b) holds for $y^{'}\in \Psi^{-1} (\mathcal{D})$. The limit is zero because of \eqref{convtozero}.

Therefore, when  $y=(y^{'},\Phi(y^{'}))\in M$ and $y^{'}\in 
\mathcal{C}\cap \mathcal{F}\cap \Psi^{-1}(\mathcal{D})\subseteq (0,1)^{n-1}$, we have 
\begin{eqnarray*}
\begin{pmatrix}
0,\dots, 0
\end{pmatrix}
=
\nabla v(y)
\begin{pmatrix}
1&0&\dots &0\\
0&1&\dots &0\\
\vdots&\vdots&\dots &\vdots\\
0&0&\dots &1 \\
\frac{\partial\Phi (y^{'})}{\partial y_1} & \frac{\partial\Phi (y^{'})}{\partial y_2} & \cdots & \frac{\partial\Phi (y^{'})}{\partial y_{n-1}}
\end{pmatrix}
=
\begin{pmatrix}
\nabla v(y) \cdot w_1, \dots,
 \nabla v(y) \cdot w_{n-1}
\end{pmatrix}.
\end{eqnarray*}
As $$ \left \{w_i  \right\}_{i=1,\dots,n-1} = \left \{ \left (e_i,\frac{\partial \Phi (y^{'})}{\partial y_i} \right)  \right\}_{i=1,\dots,n-1}$$ form a basis of $T_yM$ and $\nabla v(y) \cdot w_i=0$ for every $i$,  the tangential part of $\nabla v(y)$ is zero. To complete the proof, it suffices to note that the set of such admissible $y\in \Psi ( \mathcal{C}\cap \mathcal{F}\cap \Psi^{-1}(\mathcal{D} ))$ is of full $\sigma$-measure in $M$.

\end{proof}

\section{Acknowledgements}

A.K. was supported in part by National Science Centre (Poland), grant Opus\\
2023/51/B/ST1/02209. D.~P.~was supported by the Grant schemes at Charles University,  reg.~No.~CZ.02.2.69/0.0/0.0/19\_073/0016935, the grants no.~P201-18-00580S, P201/21-01976S, and P202/23-04720S of the Czech Science Foundation, and Charles University Research program No.~UNCE/SCI/023. T.~R.~was supported by the grant GA\v{C}R 20-19018Y  of the Czech Science Foundation.

\smallskip

The idea to derive multiplicative inequalities with an elliptic operator was first proposed to the first author by Patrizia Donato, to whom we are grateful for suggesting an interesting problem. We would like to thank  Krzysztof Bogdan, Petr Girg, and Artur Rutkowski for the helpful information about related results in PDEs and potential theory. We are grateful to the University of Warsaw and the Faculty of Mathematics, Informatics and Mechanics for the semester-long visit of D.P. and T.R. in 2020, which started the collaboration but has to be unfortunately switched to online due to pandemics.

\section{Data availability statement}

We do not analyse or generate any datasets, because our work is theoretical in nature.

\bibliographystyle{dabbrv}
\bibliography{biblio}

\end{document}